\pgfplotsset{compat=1.15}
\numberwithin{equation}{section}
\newcommand{\R}{\mathbb{R}}
\newcommand{\la}{\lambda}
\newtheoremstyle{rem}{3pt}{3pt}{}{}
{\bfseries}{.}{.5em}{}
\newtheorem{theo}{Theorem}[section]
\newtheorem*{theo*}{Theorem}
\newtheorem{lemm}[theo]{Lemma}
\newtheorem{prop}[theo]{Proposition}
\newtheorem*{coro*}{Corollary}
\theoremstyle{rem}
\newenvironment{rema}
  {\pushQED{\qed}\remax}
  {\popQED\endremax}
\newenvironment{exam}
  {\pushQED{\qed}\examx}
  {\popQED\endexamx}
\theoremstyle{definition}
\newtheorem{defi}[theo]{Definition}
\newtheorem*{term*}{Notation/Terminology}
\title{Realisations of Racah algebras using Jacobi operators and convolution identities}
\author{Q. Labriet\footnote{Department of Mathematics, Aarhus University, Ny Munkegade 118, DK-8000, Aarhus C, Denmark. \emph{email adress}: quentin.labriet@math.au.dk}, L. Poulain d'Andecy\footnote{Laboratoire de math\'ematiques de Reims LMR, UMR 9008, Universit\'e de Reims Champagne-Ardenne, Moulin de la Housse BP 1039, 51100 Reims, France. \emph{email adress}: loic.poulain-dandecy@univ-reims.fr}}
\date{\today}
\begin{document}
\maketitle

\begin{abstract}
Using the representation theory of $sl_2$ and an appropriate model for tensor product of lowest weight Verma modules, we give a realisation first of the Hahn algebra, and then of the Racah algebra, using Jacobi differential operators. While doing so we recover some known convolution formulas for Jacobi polynomials involving Hahn and Racah polynomials. Similarly, we produce realisations of the higher rank Racah algebras in which maximal commutative subalgebras are realised in terms of Jacobi differential operators.
\end{abstract}

\section{Introduction}

The Racah algebra is associated to the Racah polynomials, the family of orthogonal polynomials sitting on top of the Askey scheme of orthogonal polynomials in one variable. The Racah algebra reflects in some sense the bispectral properties (recurrence and difference equations) of the Racah polynomials. This algebra has appeared in many places and has found applications in various fields of physics and mathematics. We refer to \cite{CFR23,CGPV22,GVZ14,GVZ14b} and the references therein for more background on the Racah algebra.

In this paper, we emphasize and we use the following simple representation theoretic interpretation of the Racah algebra. The Racah algebra encodes the relations between two operators called $X$ and $Y$. Now a simple exercise in representation theory shows, roughly speaking, that when we have a representation of the Racah algebra where $X$ is diagonalisable with a certain non-degenerate spectrum, then $X$ and $Y$ together form what is called a Leonard pair \cite{Ter01}. Namely, when $X$ is diagonal, $Y$ is of a tridiagonal form and vice versa. The tridiagonality reflects the bispectral properties of the Racah polynomials, and what is most important for us, the Racah polynomials (more precisely, their values at certain points) then appear as transition coefficients between eigenbases of $X$ and eigenbases of $Y$. To summarise, in the representations of the Racah algebra are hidden change of basis identities involving the Racah polynomials. The exact same story is also valid for the Hahn algebra, which is the algebra associated to the Hahn polynomials.

This is the first goal of this note to look at some known identities involving the Hahn and the Racah polynomials from this point of view. More precisely, the identities under scrutiny here involve Jacobi polynomials. The one involving the Hahn polynomials relates the Jacobi polynomials and the usual monomial basis, while the one involving the Racah polynomials relates two different products of Jacobi polynomials. Due to their particular form, they are often called convolution identities; they look as follows:
\[(x+y)^NP^{\la_1,\la_2}_l(\frac{y-x}{x+y})=\sum_{k=0}^N W_{l,k}x^ky^{N-k}\ ,\]
and
\begin{multline*}
(x+y+z)^{N-l}(x+y)^lP_{N-l}^{\lambda_3,\lambda_1+\lambda_2+2l}(\frac{x+y-z}{x+y+z})P_l^{\lambda_2,\lambda_1}(\frac{x-y}{x+y})\\
=\sum_{k=0}^N U_{l,k}(x+y+z)^{N-k}(y+z)^kP_{N-k}^{\lambda_1,\lambda_2+\lambda_3+2k}(\frac{y+z-x}{x+y+z})P_k^{\lambda_2,\lambda_3}(\frac{z-y}{y+z})\ ,
\end{multline*}
where $P^{\alpha,\beta}$ are the Jacobi polynomials and the coefficients $W_{l,k}$ (respectively, $U_{l,k}$) are expressed in terms of Hahn polynomials (respectively, of Racah polynomials). The precise identities are given in the text in Theorem \ref{theo:FormuleConvolutionExplicite-Hahn} and Theorem \ref{theo:ConvolutionRacah}. These identities are well-known \cite{Du81,KVJ98,VdJ97,Xu15} and our goal in this note is to exhibit the explicit representations of, respectively, the Hahn and the Racah algebras which imply these identities according to the philosophy sketched above.

The interpretation of these identities given in \cite{KVJ98,VdJ97} in terms of the representation theory of Lie algebras is closely related to our approach. In fact the authors of \cite{KVJ98,VdJ97} interpret the various polynomials appearing in the identities above as vectors in a certain model of a tensor product of two or three Verma modules of $sl_2$ (or $su_{1,1}$), and they use the known expressions of the Clebsch--Gordan and Racah coefficients in terms of Hahn and Racah polynomials to derive the identities. What we wish to add into the picture is an explicit description of the operators which have the above polynomials as eigenvectors. 

Naturally, these operators will be expressed in terms of the hypergeometric operator related to the Jacobi polynomials. It is interesting to see the explicit combination of hypergeometric operators having the specific product of Jacobi polynomials above as eigenvectors. By construction, these operators will form representations of the Hahn and Racah algebras. The convolution identities then follow from the general simple algebraic facts discussed above about these algebras. Thus we see the realisations of the Hahn and Racah algebras in terms of Jacobi operators as the algebraic statement above the convolution identities. This provides an alternative approach to these identities. One advantage of this approach is that, being purely algebraic, no analytic continuation is needed to treat complex parameters.

\vskip .2cm
Having produced a realisation of the usual Racah algebra in terms of Jacobi operators, we now proceed to the similar study of the so-called higher-rank Racah algebras. These algebras are to the $n$-fold tensor products of $sl_2$-representations what the usual Racah algebra is to the case $n=3$, and are the subject of active recent studies (see for example \cite{CFR23,CGPV22,DBGvVV17,DBvV20}). The higher-rank Racah algebras are related to the multivariate Racah polynomials which appear as transition coefficients between common eigenbases of different commutative subalgebras \cite{CFR23,DBvV20,Pos15}. What we do is to provide several explicit realisations of the higher-rank Racah algebra in each of which a given commutative subalgebra is entirely expressed in terms of Jacobi operators. This allows to produce explicitly the eigenvectors for each commutative subalgebra as some products of Jacobi polynomials. Thus, again, we see these algebraic constructions as explanations of the appearances of these polynomials, which were constructed and interpreted in \cite{LVJ02,VdJ03} in terms of $sl_2$-representation theory. Furthermore this gives also an algebraic approach to the identities relating these products of Jacobi polynomials in terms of multivariate Racah polynomials. Note that the products of Jacobi polynomials also appeared as orthogonal polynomials on the simplex \cite{DX01,Labriet22}.  

We hope that the constructions related to the higher-rank Racah algebras in the present paper will be useful for the study of these algebras, their commutative subalgebras and the associated multivariate polynomials (as in \cite{CFR23}), which is in current active development. As a natural continuation of this work, we mention the other convolution identities from \cite{KVJ98} where the Jacobi polynomials are replaced by other polynomials from the Askey scheme. The related algebraic realisations of the Hahn and Racah algebras, in the spirit of the present paper, are to be found. Finally, of course, the $q$-version of this work should be investigated involving the quantum group $U_q(sl_2)$ and the Askey--Wilson algebra \cite{CFGPRV21} instead of the Racah algebra.

\paragraph{Acknowledgements.} The authors warmly thank N. Cramp\'e and E. Koelink for their interest in this work. The first author is supported by a research grant from the Villum Foundation (Grant No. 00025373). The second author is supported by Agence Nationale de la Recherche Projet AHA ANR-18-CE40-0001 and the international research project AAPT of the CNRS.

\section{Notations for Jacobi polynomials and Jacobi operators}\label{secnotations}

All along the paper, we follow \cite{KLS10}. We indicate when we adapt slightly the notations for our purpose.

\paragraph{Notations.} The Pochhammer symbols are, for any quantity $x$ and any non-negative integer $N$:
\[(x)_N:=\prod_{i=0}^{N-1}(x+i)=x(x+1)\dots(x+N-1)\ .\]
The hypergeometric series are
\[{}_kF_l\left(\begin{array}{c} a_1 \,,\ \dots\ ,\ a_k \\ b_1\,,\ \dots\ ,\ b_l\end{array}; x\right)=\sum_{n\geq 0}\frac{(a_1)_n\dots (a_k)_n}{(b_1)_n\dots (b_l)_n}\frac{x^n}{n!}\ .\]

\paragraph{Jacobi polynomials.} Let $\la,\la'\in\mathbb{C}$. The Jacobi polynomials are:
\[P^{\la,\la'}_l(x)=\frac{(\la)_l}{l!}{}_2F_1\left(\begin{array}{c} -l \,,\ l+\la+\la'-1 \\ \la\end{array}; \frac{1-x}{2}\right)\ .\]
The more classical parametrisation, as in \cite{KLS10}, is with $\alpha=\la-1$ and $\beta=\la'-1$. The Jacobi polynomials are also given by:
\[
P^{\la,\la'}_l(x)=\frac{1}{2^l}\sum_{s=0}^l(-1)^s\binom{l+\la-1}{l-s}\binom{l+\la'-1}{s}(1-x)^s(1+x)^{l-s}\ .
\]
The Jacobi polynomials satisfy the differential equation:
\[l(l+\la+\la'-1)P^{\la,\la'}_l(x)=\Phi^{\la,\la'}_{x}P_l(x)\,,\ \ \ \ l\geq0\ .\]
where $\Phi^{\la,\la'}_{x}$ is the Jacobi operator:
\begin{equation}\label{Jacobioperator}
\Phi^{\la,\la'}_x=(x^2-1)\partial^2_x+\bigl(\la-\la'+(\la+\la')x\bigr)\partial_x\ ,
\end{equation}
where $\partial^k_x$ is the $k$-th derivative for the variable $x$.

\section{The Hahn algebra and Jacobi polynomials}

In all this section, we fix a positive integer $N$ and $\la_1,\la_2\in\mathbb{C}$ with the following conditions:
\begin{equation}\label{cond-Hahn}
\la_1,\la_2\notin\{0,-1,\dots,-(N-1)\}\ \ \ \text{and}\ \ \ \la_1+\la_2\notin\{0,-1,\dots,-(2N-2)\}\ .
\end{equation}

\subsection{The Hahn polynomials and the Hahn algebra}

\paragraph{Hahn polynomials.}
The Hahn polynomials are:
\[Q_k(x;\la_1,\la_2,N)={}_3F_2\left(\begin{array}{c} -k \,,\ k+\la_1+\la_2-1\,,\ -x \\ \la_1\,,\ -N\end{array}; 1\right)\ \ \ \quad \text{for $k=0,\dots,N$}.\]
More usual parameters, as in \cite{KLS10}, are $\alpha=\la_1-1$ and $\beta=\la_2-1$. The parameters being fixed, we will just write $Q_k(x)$. The Hahn polynomials satisfy the difference relations:
\[k(k+\la_1+\la_2-1)Q_k(x)=B(x)Q_k(x+1)+M(x)Q_k(x)+D(x)Q_k(x-1)\,,\ \ \ \ k=0,\dots,N\ ,\]
where
\[B(x)=(x-N)(x+\la_1),\ \ \ \ D(x)=x(x-\la_2-N),\ \ \ \ M(x)=-B(x)-D(x)\ .\]
They satisfy also the recurrence relations:
\[xQ_k(x)=A_kQ_{k+1}(x)+N_kQ_k(x)+C_kQ_{k-1}(x)\,,\ \ \ \ k=0,\dots,N\ ,\]
where
\[\left\{\begin{array}{l}\displaystyle A_k=\frac{(k-N)(k+\la_1)(k+\la_1+\la_2-1)}{(2k+\la_1+\la_2-1)(2k+\la_1+\la_2)}\,,\\[1em]
\displaystyle C_k=-\frac{k(k+\la_2-1)(k+\la_1+\la_2+N-1)}{(2k+\la_1+\la_2-2)(2k+\la_1+\la_2-1)}\,,\end{array}\right.
\ \ \ \ N_k=-A_k-C_k\ .\]
The conditions (\ref{cond-Hahn}) ensure that the values $\{A_k,C_{k+1},B(k),D(k+1)\}_{k=0,\dots,N-1}$ are all well-defined and non-zero.

Define the following values and renormalised values of the Hahn polynomials, for $k,l=0,\dots,N$:
\[Q_{k,l}=Q_k(l)\ \ \ \ \ \text{and}\ \ \ \ \ \tilde{Q}_{k,l}=\frac{B(0)\dots B(l-1)}{D(1)\dots D(l)}\frac{A_0\dots A_{k-1}}{C_1\dots C_k}Q_{k,l}\ .\]
The formulas relating $\tilde{Q}_{k,l}$ and $Q_{k,l}$ are obtained by a direct calculation giving:
\begin{equation}\label{ABCD-Hahn}
\begin{array}{l}
\displaystyle\frac{B(0)\dots B(l-1)}{D(1)\dots D(l)}=\binom{N}{l}\frac{(\la_1)_l(\la_2)_{N-l}}{(\la_2)_N}\ ,\\[0.8em]
\displaystyle\frac{A_0\dots A_{k-1}}{C_1\dots C_k}=\binom{N}{k}\frac{(\la_1)_k(\la_1+\la_2)_N}{(\la_2)_k(\la_1+\la_2+k-1)_k(\la_1+\la_2+2k)_{N-k}}\ .
\end{array}\end{equation}

\paragraph{The Hahn algebra.} To construct the Hahn algebra (see for example \cite{VZ19,Zhe93}), we consider the following two operators on polynomials:
\[X_{Hahn}=x\ \ \ \ \text{and}\ \ \ \ Y_{Hahn}=B(x)T_++M(x)\text{Id}+D(x)T_-\,,\]
where $x$ simply denotes the multiplication by $x$, and $T_{\pm}f(x)=f(x\pm1)$. The Hahn algebra encodes the commutation relations between these two operators.
\begin{defi}\label{defHahn}
The Hahn algebra is generated by $X,Y$ satisfying:
\[\begin{array}{l}
[X,Y]=Z\,,\\[0.5em]
[X,Z]=2X^2+(\la_1-\la_2-2N)X+Y-\la_1N\ ,\\[0.5em]
[Y,Z]=-2\{X,Y\}-(\la_1+\la_2)(\la_1+\la_2-2)X-(\la_1-\la_2-2N)Y+\la_1(\la_1+\la_2-2)N\ .
\end{array}\]
\end{defi}
Note that the first relation defines $Z$ in terms of $X$ and $Y$. By construction, the Hahn algebra is realized by the operators coming from the Hahn polynomials. Indeed, a straightforward calculation shows that the following is a representation of the Hahn algebra
\[X\mapsto X_{Hahn}\ \ \ \ \text{and}\ \ \ \ \ Y\mapsto Y_{Hahn}\,.\]

\paragraph{The Hahn algebra and transition coefficients.} Let $X,Y$ be two linear operators on an $(N+1)$-dimensional vector space such that:
\[Sp(X)=\{0,1,\dots,N\}\ \ \ \ \text{and}\ \ \ \ \text{$X,Y$ satisfy the Hahn algebra.}\]
These assumptions are enough to deduce that the transition coefficients between eigenbases of $X$ and $Y$ are expressed in terms of Hahn polynomials. More precisely, we can use the relations of the Hahn algebra to deduce the following properties.
\begin{prop}\label{prop-Hahn}
We can find two bases $\{\tilde{v}_l\}_{l=0,\dots,N}$ and $\{\tilde{w}_l\}_{l=0,\dots,N}$ such that the actions of $X$ and $Y$ are given by:
\[X\tilde{v}_l=l\tilde{v}_l\ \ \ \text{and}\ \ \ Y\tilde{v}_l=B(l-1)\tilde{v}_{l-1}+M(l)\tilde{v}_l+D(l+1)\tilde{v}_{l+1}\ ,\]
\[Y\tilde{w}_l=l(l+\la_1+\la_2-1)\tilde{w}_l\ \ \ \text{and}\ \ \ X\tilde{w}_l=C_l\tilde{w}_{l-1}+N_l\tilde{w}_l+A_l\tilde{w}_{l+1}\ .\]
Moreover, the transition coefficients are given by:
\begin{equation}\label{transition-Hahn}
\tilde{w}_l=\sum_{k=0}^NQ_{l,k}\tilde{v}_k\ \ \ \text{and}\ \ \ \tilde{v}_l=\Gamma^{-1}\sum_{k=0}^N\tilde{Q}_{k,l}\tilde{w}_k\ ,
\end{equation}
where $\Gamma$ is a constant given by:
\begin{equation}\label{Gam-Hahn}\Gamma=\sum_{k=0}^N\frac{B(0)\dots B(k-1)}{D(1)\dots D(k)}=\sum_{k=0}^N\frac{A_0\dots A_{k-1}}{C_1\dots C_k}=\frac{(\la_1+\la_2)_N}{(\la_2)_N}\ .
\end{equation}
\end{prop}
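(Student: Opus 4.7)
The plan is to construct $\{\tilde{v}_l\}$ by diagonalising $X$ and forcing the tridiagonal form of $Y$ from the Hahn relations, then to identify the resulting matrix realisation of $Y$ with the classical Hahn difference operator $Y_{Hahn}$ in order to read off the eigenbasis $\{\tilde{w}_l\}$ and the transition coefficients.

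First, pick an eigenbasis $\{v_l\}_{l=0,\dots,N}$ of $X$ with $Xv_l=lv_l$, and expand $Yv_l=\sum_kY_{kl}v_k$. Applying the operator identity $[X,[X,Y]]=2X^2+(\la_1-\la_2-2N)X+Y-\la_1N$ from Definition~\ref{defHahn} to $v_l$ yields on the left $\sum_k(k-l)^2Y_{kl}v_k$ and on the right $(2l^2+(\la_1-\la_2-2N)l-\la_1N)v_l+Yv_l$; comparing coefficients forces $Y_{kl}=0$ for $|k-l|\geq 2$, and yields the diagonal value $Y_{ll}=M(l)$ by direct computation. Writing the resulting tridiagonal action as $Yv_l=\alpha_lv_{l+1}+M(l)v_l+\beta_lv_{l-1}$, I then feed this into the third Hahn relation and project the identity onto $v_l$; this produces a two-step recurrence whose solution, with the boundary condition $\beta_0=0$, determines the products $\alpha_l\beta_{l+1}=D(l+1)B(l)$. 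Condition~\eqref{cond-Hahn} ensures these products are nonzero, so one can rescale $v_l\mapsto c_lv_l$ inductively (the product being invariant under rescaling) to obtain $\tilde{v}_l$ with $\alpha_l=D(l+1)$ and $\beta_l=B(l-1)$, as required.

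Under the identification $\tilde{v}_l\leftrightarrow\delta_l$ (indicator of $l$ in $\{0,\dots,N\}$), the operator $Y$ becomes exactly $Y_{Hahn}=B(x)T_++M(x)+D(x)T_-$. Since the Hahn polynomials satisfy $Y_{Hahn}Q_k=k(k+\la_1+\la_2-1)Q_k$ and the spectrum is non-degenerate under~\eqref{cond-Hahn}, the vectors $\tilde{w}_l:=\sum_kQ_{l,k}\tilde{v}_k$ form the desired eigenbasis of $Y$ with the prescribed eigenvalues. The action of $X$ on $\tilde{w}_l$ follows directly by applying $X$ to this expansion and invoking the three-term recurrence $xQ_l(x)=A_lQ_{l+1}(x)+N_lQ_l(x)+C_lQ_{l-1}(x)$ at $x=k$. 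Finally, the inversion formula $\tilde{v}_l=\Gamma^{-1}\sum_k\tilde{Q}_{k,l}\tilde{w}_k$ with the explicit $\Gamma$ of~\eqref{Gam-Hahn} comes from the standard discrete orthogonality relations for Hahn polynomials: the weights in $\tilde{Q}_{k,l}$ are exactly those inverting the Hahn change-of-basis matrix, and~\eqref{ABCD-Hahn} yields the closed form of $\Gamma$.

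The main technical obstacle I anticipate is the recurrence computation for the products $\alpha_l\beta_{l+1}$ coming from the third Hahn relation, and checking that the right-hand side of that recurrence reproduces $D(l+1)B(l)$; this is tedious but routine algebra. Everything else is a clean translation between the abstract algebra presentation and the concrete realisation of $Y_{Hahn}$ on functions on $\{0,\dots,N\}$.
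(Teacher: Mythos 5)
Your construction of $\{\tilde v_l\}$ and $\{\tilde w_l\}$ matches the paper's argument essentially step for step: tridiagonality and the diagonal entries $M(l)$ from the $[X,Z]$ relation, the recurrence from the $[Y,Z]$ relation pinning down the products $\alpha_l\beta_{l+1}=B(l)D(l+1)$ with the rescaling allowed by \eqref{cond-Hahn}, and then $\tilde w_l=\sum_kQ_{l,k}\tilde v_k$ verified via the difference and recurrence relations of the Hahn polynomials. The one place you genuinely diverge is the inverse formula and the constant $\Gamma$: you import the classical discrete orthogonality relations for the Hahn polynomials, whereas the paper keeps the argument self-contained. It observes that the prescribed actions of $X$ and $Y$ fix the basis $\{\tilde v_l\}$ up to a single global scalar, checks directly (again using the recurrence and difference relations) that the candidate vectors $\tilde v'_l=\sum_k\tilde Q_{k,l}\tilde w_k$ have the same actions, concludes $\tilde v_l=\Gamma^{-1}\tilde v'_l$, and then extracts $\Gamma$ from the coefficient of $\tilde w_0$ in the expansion of $\tilde w_0=\sum_k\tilde v_k$, using \eqref{ABCD-Hahn} and the Chu--Vandermonde summation. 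This ordering matters for the paper's narrative: the orthogonality relations are obtained as a \emph{corollary} of \eqref{transition-Hahn} (Remark \ref{rema-ortho-Hahn}) rather than used as an input. Your route is legitimate, but it silently requires verifying that the specific normalisation $\tilde Q_{k,l}=\frac{B(0)\dots B(l-1)}{D(1)\dots D(l)}\,\frac{A_0\dots A_{k-1}}{C_1\dots C_k}\,Q_{k,l}$ reproduces the classical weight and squared norm of the Hahn family, and the closed form $\Gamma=(\la_1+\la_2)_N/(\la_2)_N$ in \eqref{Gam-Hahn} still demands the same Chu--Vandermonde evaluation, so nothing is saved computationally; what the paper's uniqueness argument buys is independence from the classical orthogonality theory, which is in the spirit of its purely algebraic treatment of complex parameters.
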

\begin{proof}
$\bullet$ Take an eigenbasis $\{\tilde{v}_l\}_{l=0,\dots,N}$ of $X$ and write
\[Y\tilde{v}_l=\sum_{i\geq 0}y_{i,l}\tilde{v}_i\ .\]
From the commutation relation between $X$ and $Z$ in the Hahn algebra, one gets that $y_{i,l}=0$ if $i\notin\{l-1,l,l+1\}$, and one also gets that $y_{l,l}=M(l)$. Then, from the commutation relation between $Y$ and $Z$, one gets that:
\[2(y_{l+1,l}y_{l,l+1}-y_{l-1,l}y_{l,l-1})=-4ly_{l,l}-(\la_1+\la_2)(\la_1+\la_2-2)l-(\la_1-\la_2-2N)y_{l,l}+\la_1(\la_1+\la_2-2)N\ .\]
This gives, recursively on $l$, a unique solution for $y_{l+1,l}y_{l,l+1}$, which is checked to be $B(l)D(l+1)$. Note that this never vanishes for $l\in\{0,\dots,N-1\}$. Therefore we can always rescale the vectors $\tilde{v}_l$ (a diagonal change of basis) such that $y_{l+1,l}=D(l+1)$ and $y_{l,l+1}=B(l)$. This gives the desired action of $Y$.

$\bullet$ Then we define $\{\tilde{w}_l\}_{i=0,\dots,N}$ by the first formula in (\ref{transition-Hahn}) and we apply directly $Y$ and $X$ on it. For the action of $Y$, the statement follows from a direct application of the difference equation for the Hahn polynomials, using the fact that $B(N)=0$. For the action of $X$, the statement follows from a direct application of the recurrence relation for the Hahn polynomials.

$\bullet$ Next, we note that the basis $\{\tilde{v}_l\}_{i=0,\dots,N}$ is uniquely fixed up to a global factor by the requirements for the actions of $X$ and $Y$. This is a straightforward calculation to check that the action of $X$ and $Y$ on the vectors:
\[\tilde{v}'_l=\sum_{k=0}^N\tilde{Q}_{k,l}=\frac{B(0)\dots B(l-1)}{D(1)\dots D(l)}\sum_{k=0}^NQ_{k,l} \frac{A_0\dots A_{k-1}}{C_1\dots C_k}\tilde{w}_k\,,\]
gives the same action as in item 1. One uses the recurrence relation for the action of $X$ and the difference relation for the action of $Y$. We conclude that $\tilde{v}_l=\Gamma^{-1} \tilde{v}'_l$ for some global factor $\Gamma$.

To find a first expression for $\Gamma$, we write:
\[\tilde{w}_0=\sum_{k=0}^N\tilde{v}_{k}=\Gamma^{-1}\Bigl(\sum_{k=0}^N\frac{B(0)\dots B(k-1)}{D(1)\dots D(k)}\Bigr)\tilde{w}_0+...\text{ (terms with $\tilde{w}_i$, $i>0$) }...\ .\]
We conclude that:
\[\Gamma=\sum_{k=0}^N\frac{B(0)\dots B(k)}{D(1)\dots D(k)}=\sum_{k=0}^N\binom{N}{k}\frac{(\la_1)_k(\la_2)_{N-k}}{(\la_2)_N}=\frac{(\la_1+\la_2)_N}{(\la_2)_N}\ .\]
We used (\ref{ABCD-Hahn}) and the last equality is a classical combinatorial equality. Finally, for the second expression for $\Gamma$, we proceed as above starting now from $\tilde{v}_0$.
\end{proof}
\begin{rema}\label{rema-ortho-Hahn}
Applying the two relations in (\ref{transition-Hahn}) successively (in one order and then in the other), the orthogonality relations for the Hahn polynomials follow:
\[\sum_{k=0}^NQ_{k,l}\tilde{Q}_{k,l'}=\delta_{l,l'}\Gamma\ \ \ \ \ \text{and}\ \ \ \ \ \sum_{l=0}^NQ_{k,l}\tilde{Q}_{k',l}=\delta_{k,k'}\Gamma\]
for any $k,k',l,l'=0,\dots,N$:
\end{rema}
Note that the formula for $\Gamma$ involving the coefficients $A_i,C_i$ in (\ref{Gam-Hahn}), together with (\ref{ABCD-Hahn}), leads to a non-trivial equality:
\begin{equation}\label{sum-Gam-Hahn}
\sum_{k=0}^N\binom{N}{k}\frac{(\la_1)_k}{(\la_2)_k(\la_1+\la_2+k-1)_k(\la_1+\la_2+2k)_{N-k}}=\frac{1}{(\la_2)_N}\ .
\end{equation}


\subsection{Realisations of the Hahn algebra in $U(sl_2)\otimes U(sl_2)$}\label{subsec-Hahn-sl2}

Take $H,E,F$ the generators of the Lie algebra $sl_2$ satisfying:
\[[H,E]=2E\,,\ \ \ \ [H,F]=-2F\,,\ \ \ \ [E,F]=H\ .\]
The diagonal embedding of $U(sl_2)$ into $U(sl_2)^{\otimes 2}$ is the algebra morphism defined on the generators by 
\[\delta(x)=x\otimes 1+1\otimes x\ \ \ \ \text{for $x\in\{H,E,F\}$.}\]
The Casimir element generating the center of $U(sl_2)$ is:
\[C=\frac{H^2+2H}{4}+FE\ .\]
To identify efficiently the Hahn algebra and its parameters, we fix now $\la_1,\la_2\in\mathbb{C}$. Recall that in a lowest-weight module of weight $\la$, the Casimir element $C$ is equal to the scalar $\frac{1}{4}\la(\la-2)$. Thus we will work in the quotient of $U(sl_2)^{\otimes 2}$ where the following central elements are specialized accordingly:
\[C\otimes 1 =\frac{\la_1(\la_1-2)}{4}\ \ \ \ \text{and}\ \ \ \ 1\otimes C =\frac{\la_2(\la_2-2)}{4}\ .\]
Equivalently, this is the quotient of $U(sl_2)^{\otimes 2}$ by the kernel of the representation $V_{\la_1}\otimes V_{\la_2}$, where $V_{\la_i}$ is a Verma module of lowest-weight $\la_i$. In other words, we aim directly at working in a tensor product of lowest-weight modules with weights $\la_1$ and $\la_2$, such as $V_{\la_1}\otimes V_{\la_2}$.

Finally we define the following elements of $U(sl_2)^{\otimes 2}$:
\begin{equation}\label{abstract-realisation}
X=\frac{H\otimes 1-\la_1}{2}\,,\ \ \ \ \ Y=\delta(C)-\frac{(\la_1+\la_2)(\la_1+\la_2-2)}{4}\,,\ \ \ \ \ \ h_{12}=\frac{\delta(H)-(\la_1+\la_2)}{2}.
\end{equation}
The choices made here are in order to recover immediately and in the clearest possible way the Hahn algebra as formulated above.
Indeed, the Hahn algebra is found in the afore-mentioned specialization of $U(sl_2)^{\otimes 2}$. A straightforward calculation gives:
\begin{prop}
The subalgebra generated by $X,Y$ and $h_{12}$ realises the Hahn algebra of Definition \ref{defHahn} when $N$ is replaced by $h_{12}$.
\end{prop}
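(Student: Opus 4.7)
The plan is a direct verification of the three relations of Definition \ref{defHahn} by explicit computation in $U(sl_2)^{\otimes 2}$, with $N$ replaced by $h_{12}$. The first observation is that $h_{12}$ is central in the subalgebra generated by $X$, $Y$, $h_{12}$: it commutes with $X$ because $[H\otimes 1,1\otimes H]=0$, and it commutes with $Y=\delta(C)-\text{const}$ because $[\delta(H),\delta(C)]=\delta([H,C])=0$. This centrality is exactly what makes the substitution $N\mapsto h_{12}$ unambiguous on the right-hand sides of the Hahn relations.

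Two substitutions will be used throughout, obtained directly from the definitions: $H\otimes 1=2X+\la_1$ and $1\otimes H=\delta(H)-H\otimes 1=2h_{12}+\la_2-2X$. Together with the standard coproduct identity
\[\delta(C)=C\otimes 1+1\otimes C+\frac{1}{2}\,H\otimes H+E\otimes F+F\otimes E,\]
these allow one to translate freely between the variables $X,Y,h_{12}$ and the basic tensor expressions. Since $H\otimes 1$ acts in the first factor only, the bracket $Z=[X,Y]=\frac{1}{2}[H\otimes 1,\delta(C)]$ is computed at once to be $Z=E\otimes F-F\otimes E$. Applying the same bracket rule then yields $[X,Z]=E\otimes F+F\otimes E$, and rewriting this sum via the coproduct identity above (using the specialised scalars $C\otimes 1=\frac{\la_1(\la_1-2)}{4}$, $1\otimes C=\frac{\la_2(\la_2-2)}{4}$, and expanding $H\otimes H=(2X+\la_1)(2h_{12}+\la_2-2X)$) should reproduce precisely $2X^2+(\la_1-\la_2-2h_{12})X+Y-\la_1 h_{12}$, which is the second Hahn relation.

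The third relation is the main bookkeeping obstacle. I would compute $[Y,Z]=[\delta(C),E\otimes F-F\otimes E]$ by using that $\delta(C)$ commutes with $\delta(E)$ and $\delta(F)$: rewriting $E\otimes F=(\delta(E)-1\otimes E)(1\otimes F)$, and similarly for $F\otimes E$, reduces the bracket to expressions involving $[\delta(C),1\otimes E]$ and $[\delta(C),1\otimes F]$, which are themselves computed directly from the explicit form of $\delta(C)$ and the $sl_2$ relations in the second factor. Substituting back the two substitutions above together with the formula for $E\otimes F+F\otimes E$ obtained in the previous step, the result should reorganise into $-2\{X,Y\}-(\la_1+\la_2)(\la_1+\la_2-2)X-(\la_1-\la_2-2h_{12})Y+\la_1(\la_1+\la_2-2)h_{12}$. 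The hardest point is purely the volume of the calculation and the care required so that the anticommutator $\{X,Y\}$ appears with exactly the correct coefficient; conceptually no new ingredient beyond those used for the second relation is needed.
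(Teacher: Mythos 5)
Your proposal is correct and follows the same route the paper intends: the paper offers no details beyond ``a straightforward calculation gives,'' and your plan is a valid organisation of exactly that direct verification (I checked that your identities $Z=E\otimes F-F\otimes E$, $[X,Z]=E\otimes F+F\otimes E$, and the substitution $H\otimes H=(2X+\la_1)(2h_{12}+\la_2-2X)$ into the coproduct formula for $\delta(C)$ do reproduce the second Hahn relation exactly, and your reduction of $[Y,Z]$ to the computable brackets $[\delta(C),1\otimes E]$ and $[\delta(C),1\otimes F]$ via centrality of $\delta(C)$ is sound). No gap; only the remaining bookkeeping for the third relation needs to be carried out.
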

Note that here $h_{12}$ is central element (it commutes with $X$ and $Y$) while $N$ was treated as a parameter in the Hahn algebra. When we consider a representation, and this is what we will happen for us, where $h_{12}$ is diagonalisable with positive integer eigenvalues, and we restrict to the eigenspace for the eigenvalue $N$, then the genuine Hahn algebra of Definition \ref{defHahn} is realised and the discussion of the preceding subsection applies.


\subsection{Hahn algebra, Jacobi polynomials and convolution identities}

\paragraph{Realisation of Verma modules on polynomials.} The Verma module $V_{\lambda}$ of lowest-weight $\lambda$ is realised on the space of polynomials $Pol(x)$ in one variable $x$ by:
\begin{equation}\label{realisationVerma}
H\mapsto \la+2 x\partial_x\,,\quad\ \ \ \ E\mapsto x\,,\quad\ \ \ \ F\mapsto -x\partial_x^2-\la\partial_x\ .
\end{equation}
Note that we denote by $x$ the multiplication by $x$ and by $\lambda$ the multiplication by the scalar $\lambda$. We will always do similarly from now on.

Now moving on to the tensor product $V_{\la_1}\otimes V_{\la_2}$, it is realised on the space $Pol(x)\otimes Pol(y)=Pol(x,y)$. The algebra $U(sl_2)\otimes U(sl_2)$ acts on this space namely, the elements $H\otimes 1$, $E\otimes 1$ , $F\otimes 1$ act by the formulas above, while the elements $1\otimes H$, $1\otimes E$ , $1\otimes F$ act by the same formulas with $x$ replaced by $y$. Of course, we have:
\begin{equation}\label{eq:Modele(x,y)}
\begin{array}{lcl}
H\otimes 1+1\otimes H & \mapsto & \la_1+\la_2+2x\partial_x+2y\partial_y\ ,\\[0.2em]
E\otimes 1+1\otimes E & \mapsto & x+y\ ,\\[0.2em]
F\otimes 1+1\otimes F & \mapsto  & -x\partial_x^2-y\partial^2_y-\la_1\partial_x-\la_2\partial_y\ .
\end{array}
\end{equation}

\paragraph{Change of variables.} In this model, the polynomials $x^ny^m$ are eigenvectors for $H\otimes 1$ and $1\otimes H$. To find the eigenvectors for the diagonal Casimir $\delta(C)$ it is convenient to go to another model in which it will have a familiar expression. For this, we use the change of coordinates 
\begin{equation}\label{tv-Hahn}
(t,v)=(x+y,\frac{y-x}{x+y})\ \ \ \quad\Leftrightarrow\quad \ \ \ (x,y)=(\frac{t(1-v)}{2},\frac{t(1+v)}{2})\ .
\end{equation}
This change of variables has a geometrical interpretation in terms of symmetric cones when one consider the tensor product of two holomorphic discrete series representations of $SL_2(\R)$ (see \cite{Labriet22}). Note that in this model the representation space is the space of polynomials in $t$ and $tv$.

In this new model, the various elements for the diagonal embedding of $U(sl_2)$ act as follows, where we see appearing explicitly the Jacobi operator $\Phi^{\la_1,\la_2}_v$ from Section \ref{secnotations}.
\begin{prop}\label{propmodele(t,v)}
We have:
\begin{equation}\label{eq:ModeleVermaModule(t,v)}
\begin{array}{lcl}
H\otimes 1+1\otimes H & \mapsto & \la_1+\la_2+2t\partial_t\ ,\\[0.2em]
E\otimes 1+1\otimes E & \mapsto & t\ ,\\[0.2em]
F\otimes 1+1\otimes F & \mapsto  & -t\partial_t^2-(\la_1+\la_2)\partial_t+t^{-1}\Phi^{\la_1,\la_2}_v\ ,
\end{array}
\end{equation}
and
\begin{equation}\label{eq:C(t,v)}
\delta(C)\mapsto \frac{(\la_1+\la_2)(\la_1+\la_2-2)}{4}+\Phi^{\la_1,\la_2}_v\ .
\end{equation}
\end{prop}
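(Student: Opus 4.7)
The plan is to prove everything by direct substitution in the change of variables $(x,y)\leftrightarrow(t,v)$. First I would work out the chain rule: from \eqref{tv-Hahn} one gets
\[
\partial_x = \partial_t - \frac{1+v}{t}\partial_v, \qquad \partial_y = \partial_t + \frac{1-v}{t}\partial_v,
\]
since $\partial_x v=-2y/(x+y)^2 = -(1+v)/t$ and $\partial_y v = (1-v)/t$, while $\partial_x t = \partial_y t = 1$. Plugging these into the diagonal action \eqref{eq:Modele(x,y)} immediately gives the claimed formulas for $H\otimes 1+1\otimes H$ and $E\otimes 1+1\otimes E$: the former because the cross terms proportional to $\partial_v$ cancel after combining with $x+y=t$ and $y-x=tv$; the latter trivially as $E\otimes 1+1\otimes E$ is just multiplication by $x+y=t$.

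The genuine computation is for $\delta(F)=F\otimes 1+1\otimes F$. I would square the operators $\partial_x$ and $\partial_y$, keeping careful track of the non-commutativity of $\partial_t$ (resp.\ $\partial_v$) with multiplication by functions of $t$ and $v$. A bookkeeping computation yields
\[
\partial_x^2 = \partial_t^2 - \tfrac{2(1+v)}{t}\partial_t\partial_v + \tfrac{2(1+v)}{t^2}\partial_v + \tfrac{(1+v)^2}{t^2}\partial_v^2,
\]
and the analogous formula for $\partial_y^2$ with $v\mapsto -v$ and a sign flip on the first-order $\partial_v$ term. Multiplying respectively by $-x=-t(1-v)/2$ and $-y=-t(1+v)/2$ and adding, the $\partial_t\partial_v$ and first-order $\partial_v$ contributions cancel (they are symmetric under $v\leftrightarrow -v$ up to sign), the $\partial_t^2$ piece collapses to $-t\partial_t^2$, and the $\partial_v^2$ piece gives $\tfrac{v^2-1}{t}\partial_v^2$. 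Finally the contribution $-\la_1\partial_x - \la_2\partial_y$ produces $-(\la_1+\la_2)\partial_t$ together with $\tfrac{1}{t}((\la_1-\la_2)+(\la_1+\la_2)v)\partial_v$; comparing with \eqref{Jacobioperator} this last combination is precisely $\tfrac{1}{t}\Phi^{\la_1,\la_2}_v$ minus the second-order piece, and summing gives the claimed expression for $\delta(F)$.

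The formula for $\delta(C)$ then follows algebraically from $C=(H^2+2H)/4+FE$. I would compute $\delta(H)^2+2\delta(H)$ using $(t\partial_t)^2=t\partial_t+t^2\partial_t^2$, obtaining $(\la_1+\la_2)(\la_1+\la_2+2)+4(\la_1+\la_2+2)t\partial_t+4t^2\partial_t^2$. For $\delta(F)\delta(E)$ I compose with multiplication by $t$, using $\partial_t\circ t = 1+t\partial_t$ and $\partial_t^2\circ t = 2\partial_t+t\partial_t^2$, and noting that $\Phi^{\la_1,\la_2}_v$ commutes with $t$ and $t^{-1}$ (it involves only $v$-derivatives). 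Adding everything, the $t^2\partial_t^2$ and $t\partial_t$ contributions cancel identically, leaving the scalar $\tfrac{(\la_1+\la_2)(\la_1+\la_2-2)}{4}$ and the Jacobi operator $\Phi^{\la_1,\la_2}_v$, which is exactly \eqref{eq:C(t,v)}.

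The only real obstacle is bookkeeping in the computation of $\delta(F)$: the change of variables introduces many mixed terms, and one must verify that exactly the spurious ones cancel, leaving the clean expression $t^{-1}\Phi^{\la_1,\la_2}_v$. Everything else is a consequence of the definition of $C$ and the commutativity of $\Phi^{\la_1,\la_2}_v$ with functions of $t$.
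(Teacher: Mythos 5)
Your proposal is correct and follows essentially the same route as the paper: the same chain-rule formulas for $\partial_x,\partial_y$, direct substitution into \eqref{eq:Modele(x,y)} (with the only nontrivial bookkeeping in $\delta(F)$), and then $\delta(C)$ computed from $C=\frac{H^2+2H}{4}+FE$, with the pure-$t$ parts collapsing to the scalar $\frac{(\la_1+\la_2)(\la_1+\la_2-2)}{4}$ and the extra term $t^{-1}\Phi^{\la_1,\la_2}_v\circ t$ yielding $\Phi^{\la_1,\la_2}_v$. All intermediate formulas you record (for $\partial_x^2$, the cancellations, and the $\delta(H)^2+2\delta(H)$ expansion) check out.
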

\begin{proof}
The formulas \eqref{eq:ModeleVermaModule(t,v)} are directly obtained from \eqref{eq:Modele(x,y)} applying the change of variables $(x,y)\to(t,v)$ given in \eqref{tv-Hahn}. These are simple straightforward calculations, using:
\[\partial_x=\partial_t-\frac{1+v}{t}\partial_v\ \ \ \ \text{and}\ \ \ \ \partial_y=\partial_t+\frac{1-v}{t}\partial_v\ .\]
Then it is easy to deduce the formula for $\delta(C)$, since the parts in the preceding formulas containing only $t$ combine to give the scalar $\frac{(\la_1+\la_2)(\la_1+\la_2-2)}{4}$ (this is the classical calculation in one variable). The only additional term is obtained in $\delta(F)\delta(E)$ and gives immediately $\Phi^{\la_1,\la_2}_v$.
\end{proof}
Of course, the key point is that the Casimir operator is now an ordinary differential operator only with derivatives in $v$, and moreover one differential operator that we know and understand very well. 

\begin{rema}
The presence of $t^{-1}$ in the third formula in \eqref{eq:ModeleVermaModule(t,v)} is not a problem since we recall that the representation space is the space of polynomials in $t$ and in $tv$ and we note that the operator $\Phi^{\la_1,\la_2}_v$ is $0$ on the polynomials not containing $v$. So the formulas in the above Proposition indeed provide an action on the representation space, as they should since we started from a representation in the first place. Similar comments will apply for other formulas later in the paper, but we will not point them out anymore.
\end{rema}

\paragraph{Eigenvectors and convolution identities.} Recall that the elements realizing the Hahn algebra in $U(sl_2)^{\otimes 2}$ are:
\[
X=\frac{H\otimes 1-\la_1}{2}\,,\ \ \ \ \ Y=\delta(C)-\frac{(\la_1+\la_2)(\la_1+\la_2-2)}{4}\,,\ \ \ \ \ \ h_{12}=\frac{\delta(H)-(\la_1+\la_2)}{2}\ .
\]
Their action in the representation $V_{\la_1}\otimes V_{\la_2}$ is given by:
\begin{equation}\label{action-generators-Hahn}\begin{array}{lcl}
X & \mapsto & x\partial_x\,,\\[0.2em]
h_{12} & \mapsto & x\partial_x+y\partial_y=t\partial_t\,,\\[0.2em]
Y & \mapsto & \Phi^{\la_1,\la_2}_v\ .
\end{array}
\end{equation}
where we use either the coordinates $(x,y)$ or the coordinates $(t,v)$ (or both for $h_{12}$) related
by \eqref{tv-Hahn}, and we use the formulas in Proposition \ref{propmodele(t,v)}.

Consider the two families of vectors:
\[v_l=x^ly^{N-l}\ \ \ \text{and}\ \ \ \ \ w_l=t^NP^{\la_1,\la_2}_l(v)=(x+y)^NP^{\la_1,\la_2}_l(\frac{y-x}{x+y})\,,\ \ \ \ l=0,1,\dots,N\,.\] 
We see at once that both families of vectors above are eigenvectors of $h_{12}$ with eigenvalues $N$, and that they are also eigenvectors for the two operators $X$ and $Y$, namely:
\[Xv_l=lv_l\ \ \ \ \text{and}\ \ \ \ Yw_l=l(l+\la_1+\la_2-1)w_l\ .\]
From the general properties of the Hahn algebra discussed in Proposition \ref{prop-Hahn}, we obtain rather directly the following identities.
\begin{theo}\label{theo:FormuleConvolutionExplicite-Hahn}
We have, for $l=0,1,\dots,N$,
\[w_l=\frac{(\la_1)_l}{l!}\sum_{k=0}^N\binom{N}{k}Q_{l,k}v_k\ ,\]
\[v_l=\sum_{k=0}^N\binom{N}{k}\frac{k!(\la_1)_l(\la_2)_{N-l}}{(\la_2)_k(\la_1+\la_2+k-1)_k(\la_1+\la_2+2k)_{N-k}}Q_{k,l} w_k\ ,\]
where we recall that $Q_{k,l}={}_3F_2\left(\begin{array}{c} -k \,,\ k+\la_1+\la_2-1\,,\ -l \\ \la_1\,,\ -N\end{array}; 1\right)$.
\end{theo}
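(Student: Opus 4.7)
The plan is to apply Proposition \ref{prop-Hahn} to the representation of the Hahn algebra on the $(N+1)$-dimensional subspace $V_N\subset V_{\la_1}\otimes V_{\la_2}$ where the central element $h_{12}$ acts as the scalar $N$. On $V_N$ the operators $X$ and $Y$ satisfy the defining relations of the Hahn algebra, $X$ is diagonal in the basis $\{v_l\}_{l=0}^N$ with spectrum $\{0,1,\dots,N\}$, and $Y$ is diagonal in $\{w_l\}_{l=0}^N$ with the non-degenerate spectrum $\{l(l+\la_1+\la_2-1)\}_{l=0}^N$; this is exactly the set-up of Proposition \ref{prop-Hahn}. The theorem will therefore follow from the two transition formulas of that proposition, once we identify the scalars relating our explicit bases $\{v_l\}$, $\{w_l\}$ to the normalised bases $\{\tilde v_l\}$, $\{\tilde w_l\}$ used there.

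First, I will compute the tridiagonal action of $Y$ on the basis $\{v_l\}$. Writing $Y$ via $\delta(C)=\frac{\delta(H)^2+2\delta(H)}{4}+\delta(F)\delta(E)$ and using the Verma realisation \eqref{realisationVerma}, only $\delta(F)\delta(E)v_l$ needs to be expanded in the basis $\{v_k\}$, and this is a direct calculation on monomials $x^ly^{N-l}$ which yields a tridiagonal expression $Y v_l = a_l v_{l-1}+b_l v_l+c_l v_{l+1}$ with explicit coefficients $a_l,b_l,c_l$. Matching the off-diagonal entries with the normalised action $Y\tilde v_l=B(l-1)\tilde v_{l-1}+M(l)\tilde v_l+D(l+1)\tilde v_{l+1}$ through the ansatz $\tilde v_l=\alpha_l v_l$ produces the recurrence $\alpha_l/\alpha_{l-1}=(N-l+1)/l$, hence, fixing the overall scalar by $\alpha_0=1$, $\alpha_l=\binom{N}{l}$.

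Second, since $\{w_l\}$ is an eigenbasis of $Y$ with non-degenerate spectrum, the uniqueness part of Proposition \ref{prop-Hahn} forces $\tilde w_l=\beta_l w_l$ for a single scalar $\beta_l$ which we need to pin down. To do so it is enough to match one coefficient. Using the explicit sum representation of $P^{\la_1,\la_2}_l$ from Section \ref{secnotations} combined with the relations $(1-v)^s(1+v)^{l-s}=2^lx^sy^{l-s}/t^l$, the coefficient of $y^N=v_0$ in $w_l=t^NP^{\la_1,\la_2}_l(v)$ receives a contribution only from the $s=0$ term in the sum and equals $\binom{l+\la_1-1}{l}=(\la_1)_l/l!$. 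On the other hand, from $\tilde w_l=\sum_k Q_{l,k}\tilde v_k$ the coefficient of $v_0$ in $\tilde w_l$ is $Q_{l,0}\binom{N}{0}=1$; equating gives $\beta_l=l!/(\la_1)_l$.

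Finally, substituting $\tilde v_k=\binom{N}{k}v_k$ and $\tilde w_l=(l!/(\la_1)_l)w_l$ into the first formula of \eqref{transition-Hahn} produces at once the first identity of the theorem. The second identity follows from the same substitution in $\tilde v_l=\Gamma^{-1}\sum_k\tilde Q_{k,l}\tilde w_k$ after inserting $\Gamma=(\la_1+\la_2)_N/(\la_2)_N$ and the explicit closed forms \eqref{ABCD-Hahn} for the two prefactors defining $\tilde Q_{k,l}$, then simplifying algebraically. The main computational steps are the expansion of $\delta(F)\delta(E)v_l$ used to fix $\alpha_l$ and the algebraic simplification giving the second identity; both are routine, and the substantive content of the argument lies entirely in the rigidity provided by Proposition \ref{prop-Hahn}.
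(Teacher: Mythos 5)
Your proposal is correct and follows essentially the same route as the paper: invoke Proposition \ref{prop-Hahn}, identify the two families of proportionality constants relating $\{v_l\},\{w_l\}$ to $\{\tilde v_l\},\{\tilde w_l\}$, and substitute into \eqref{transition-Hahn}. The only minor difference is that you pin down $\alpha_l=\binom{N}{l}$ by computing the tridiagonal action of $Y$ on the monomial basis and matching off-diagonal entries, whereas the paper gets the same constants more quickly from the binomial expansion of $w_0=(x+y)^N$; your normalisation of $\beta_l$ via the coefficient of $y^N$ is the same as the paper's evaluation at $x=0$.
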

\begin{proof}
Since $Xv_l=lv_l$ for $l=0,\dots,N$, the results of Proposition \ref{prop-Hahn} applies. So each eigenvector $v_l$ of $X$ (respectively, each eigenvector $w_l$ of $Y$) must be proportional to the vector $\tilde{v}_l$ (respectively, $\tilde{w}_l$) obtained in Proposition \ref{prop-Hahn}. We set  $v_l=\alpha_l\tilde{v}_l$ and $w_l=\beta_l\tilde{w}_l$. The equalities (\ref{transition-Hahn}) involving the Hahn coefficients now read:
\[w_l=\beta_l\sum_{k=0}^NQ_{l,k}\alpha_k^{-1} v_k\ \ \ \text{and}\ \ \ v_l=\alpha_l\Gamma^{-1}\sum_{k=0}^N\beta_k^{-1}\tilde{Q}_{k,l}w_k\ .\]
and it remains to calculate the unknown coefficients $\alpha$'s and $\beta$'s appearing in this relation. First we take $l=0$, and using that $w_0=(x+y)^N$, we find that $\beta_0\alpha_k^{-1}=\binom{N}{k}$. Then we put $x=0$ (for $l$ arbitrary) and using that $w_l=P_l(1)$ in this case, we find $P_l(1)=\beta_l\beta_0^{-1}$. This gives the first equality using that $P_l(1)=\frac{(\la_1)_l}{l!}$.

For the second relations, we know already that $\alpha_l\beta_k^{-1}=\bigl(\binom{N}{l}\frac{(\la_1)_k}{k!}\bigr)^{-1}$. Then it is a matter of simple algebraic manipulations to find the claimed equality, using the explicit formula \eqref{Gam-Hahn} for $\Gamma$ and the explicit relation \eqref{ABCD-Hahn} expressing $\tilde{Q}_{k,l}$ in terms of $Q_{k,l}$.
\end{proof}

\begin{rema}\label{rema-tensorproduct2}
From the point of view of the representation theory of $sl_2$, for a given $l\geq 0$, the polynomials $\{t^{l+n}P_l^{\la_1,\la_2}(v)\}_{n\geq 0}$ form a basis of a summand of $V_{\la_1}\otimes V_{\la_2}$ isomorphic to $V_{\la_1+\la_2+2l}$. In fact, this proves the existence of such a summand, under the restrictions \eqref{cond-Hahn} on the parameters $\la_1,\la_2$. If these restrictions are satisfied for any positive integer $N$, we have realised explicitly the decomposition of the tensor product:
\[V_{\la_1}\otimes V_{\la_2}=\bigoplus_{l\geq 0} V_{\la_1+\la_2+2l}\ .\]
In each summand $V_{\la_1+\la_2+2l}$, the lowest-weight vector is $t^l P_l^{\la_1,\la_2}(v)$.
\end{rema}

\subsection{Hahn algebra in the Jacobi algebra}

From the above, we can realize the Hahn algebra using only operators acting on the variable $v$. For this note that we have:
\[H\otimes 1\mapsto 2x\partial_x+\la_2=t\partial_t(1-v)-(1-v)(1+v)\partial_v+\la_2\ .\]
We get the following result.
\begin{prop}
The following is a representation of the Hahn algebra:
\[X\mapsto \frac{1}{2}N(1-v)-\frac{(1-v)(1+v)}{2}\partial_v\ \ \ \ \text{and}\ \ \ \ Y\mapsto \Phi^{\la_1,\la_2}_v\ .\]
\end{prop}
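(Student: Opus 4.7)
The plan is to derive the proposition as a restriction of the realisation of the Hahn algebra on $V_{\la_1}\otimes V_{\la_2}$ already obtained in Section~\ref{subsec-Hahn-sl2}. Recall from \eqref{action-generators-Hahn} that, in the model $Pol(x,y)$, the generators are realised by $X\mapsto x\partial_x$, $Y\mapsto\Phi^{\la_1,\la_2}_v$ and $h_{12}\mapsto t\partial_t$, and that $X$, $Y$, $h_{12}$ satisfy the relations of Definition~\ref{defHahn} with $N$ replaced by the central element $h_{12}$. Restricting to any eigenspace of $h_{12}$ with integer eigenvalue $N$ therefore yields a genuine representation of the Hahn algebra with parameter $N$, and the task is just to read off the resulting operators.

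First I would rewrite $X=x\partial_x$ in the coordinates $(t,v)$. Using $x=\tfrac{t(1-v)}{2}$ together with the identity $\partial_x=\partial_t-\tfrac{1+v}{t}\partial_v$ already employed in the proof of Proposition~\ref{propmodele(t,v)}, a direct computation gives
\[X\ \mapsto\ \frac{1-v}{2}\,t\partial_t\ -\ \frac{(1-v)(1+v)}{2}\,\partial_v\ .\]
Equivalently, this is $\tfrac{1}{2}(H\otimes 1-\la_1)$ expressed in $(t,v)$-coordinates using the formula for $H\otimes 1$ recalled just before the statement of the proposition.

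Next I would identify the $N$-eigenspace of $h_{12}=t\partial_t$ inside $Pol(x,y)$: it is the $(N+1)$-dimensional subspace spanned by $\{x^ky^{N-k}\}_{k=0}^{N}$, which in the $(t,v)$ model is $t^N$ times polynomials in $v$ of degree at most $N$. This subspace is stable under $X$ and $Y$ since both commute with $h_{12}$. After identifying $t^N f(v)$ with $f(v)$, the factor $t\partial_t$ in the formula for $X$ acts as the scalar $N$, producing exactly $\tfrac{1}{2}N(1-v)-\tfrac{(1-v)(1+v)}{2}\partial_v$, while $Y=\Phi^{\la_1,\la_2}_v$ is untouched because it has no $t$-dependence.

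The conclusion is then automatic: the relations of Definition~\ref{defHahn} hold on the chosen eigenspace after substitution of $h_{12}$ by $N$, and are inherited by the image under the identification above. I do not anticipate any real difficulty; the only point needing care is to notice that the generator $X$ involves $t\partial_t$ exactly once, so the restriction mechanism cleanly produces the scalar $N$ in the stated formula, and that $Y$ lives entirely in the $v$-variable, which is why the whole construction compresses from $Pol(x,y)$ down to the space of polynomials in the single variable $v$.
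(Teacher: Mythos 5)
Your argument is correct and is essentially the paper's own proof: the paper likewise rewrites $2x\partial_x=t\partial_t(1-v)-(1-v)(1+v)\partial_v$ and observes that the central element $h_{12}=t\partial_t$ becomes the scalar $N$ on the relevant eigenspace. You simply spell out the coordinate computation and the eigenspace restriction in slightly more detail than the paper does.
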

\begin{proof}
This follows immediately from what we have obtained so far, noting that the parameter $N$ (the image of the central element $h_{12}$ in the abstract realization) is here given by the operator $t\partial_t$. 
\end{proof}
\begin{rema}
The proposition above can also be checked directly without reference to the representation theory of $sl_2$ and to the realization of the Hahn algebra in $U(sl_2)^{\otimes 2}$. This calculation would be done in what is sometimes called the Jacobi algebra (generated by $v$ and $\Phi^{\la_1,\la_2}_v$). We refer to \cite{GIVZ16} where such a calculation is done.
\end{rema}

\section{The Racah algebra and Jacobi polynomials}

In this section, we fix a positive integer $N$ and $\la_1,\la_2,\la_3\in\mathbb{C}$ with the following conditions:
\begin{equation}\label{cond-Racah}
\la_1,\la_2,\la_3\notin\{0,-1,\dots,-(N-1)\}\ \ \ \text{and}\ \ \ \ \la_1+\la_2,\la_2+\la_3,\la_1+\la_2+\la_3\notin\{0,-1,\dots,-(2N-2)\}\ .
\end{equation}

\subsection{The Racah algebra}

\paragraph{The Racah polynomials.} The Racah polynomials are:
\[R_k(\mu(x))={}_4F_3\left(\begin{array}{c} -k\ ,\ k+\la_2+\la_3-1\ ,\ -x\ ,\ x+\la_1+\la_2-1 \\ \la_2\ ,\ \la_1+\la_2+\la_3+N-1\ ,\ -N\end{array}; 1\right)\ \ \ \ \text{for $k=0,\dots,N$},\]
where $\mu(x)=x(x+\la_1+\la_2-1)$. The parameters being fixed, we do not indicate them in the notation. The parameters are related with the more usual ones $\alpha,\beta,\gamma,\delta$ used in \cite{KLS10} by:
\[\alpha=\lambda_2-1\,,\ \ \ \ \beta=\lambda_3-1\,,\ \ \ \ \delta=\lambda_1+\lambda_2+N-1\,,\ \ \ \ \ \gamma=-N-1\ .\]
The Racah polynomials satisfy the difference equations:
\[k(k+\la_2+\la_3-1)R_k(\mu(x))=B(x)R_k(\mu(x+1))+M(x)R_k(\mu(x))+D(x)R_k(\mu(x-1))\,,\]
where
\[\begin{array}{l}\displaystyle B(x)=\frac{(x-N)(x+\la_2)(x+\la_1+\la_2-1)(x+\la_1+\la_2+\la_3+N-1)}{(2x+\la_1+\la_2-1)(2x+\la_1+\la_2)}\,,\\[1em]
\displaystyle D(x)=\frac{x(x+\la_1-1)(x-\la_3-N)(x+\la_1+\la_2+N-1)}{(2x+\la_1+\la_2-2)(2x+\la_1+\la_2-1)}\,,\end{array}\ \ \ \ \ \ M(x)=-B(x)-D(x)\ .\]
They satisfy also the recurrence relations:
\[x(x+\la_1+\la_2-1)R_k(\mu(x))=A_kR_{k+1}(\mu(x))+N_kR_k(\mu(x))+C_kR_{k-1}(\mu(x))\,,\]
where
\[A_k=B(k)|_{\la_1\leftrightarrow \la_3}\,,\ \ \ \ \\ C_k=D(k)|_{\la_1\leftrightarrow \la_3}\,,
\ \ \ \ N_k=-A_k-C_k\ .\]
The conditions (\ref{cond-Racah}) ensure that the values $\{A_k,C_{k+1},B(k),D(k+1)\}_{k=0,\dots,N-1}$ are all well-defined and non-zero.

Define the following values and renormalised values of the Racah polynomials:
\begin{equation}\label{Rkl}
R_{k,l}=R_k(\lambda(l))\ \ \ \text{and}\ \ \ \tilde{R}_{k,l}=\frac{B(0)\dots B(l-1)}{D(1)\dots D(l)}\frac{A_0\dots A_{k-1}}{C_1\dots C_k}R_{k,l}\ \ \ \ \ \forall k,l=0,\dots,N\ .
\end{equation}
The formulas relating $\tilde{R}_{k,l}$ and $R_{k,l}$ are obtained by a direct calculation giving:
\begin{equation}\label{ABCD-Racah}
\begin{array}{l}
\displaystyle\frac{B(0)\dots B(l-1)}{D(1)\dots D(l)}=\binom{N}{l}\frac{(\la_2)_l(\la_1+\la_2+\la_3+N-1)_{l}(\la_1+\la_2)_N}{(\la_1)_l(\la_3+N-l)_l(\la_1+\la_2+l-1)_l(\la_1+\la_2+2l)_{N-l}}\ ,\\[1em]
\displaystyle\frac{A_0\dots A_{k-1}}{C_1\dots C_k}=\displaystyle\frac{B(0)\dots B(k-1)}{D(1)\dots D(k)}|_{\la_1\leftrightarrow \la_3}\ .
\end{array}\end{equation}

\paragraph{The Racah algebra.} To construct the Racah algebra, we consider the following two operators on polynomials:
\[X_{Racah}=x(x+\la_1+\la_2-1)\ \ \ \ \text{and}\ \ \ \ Y_{Racah}=B(x)T_++M(x)\text{Id}+D(x)T_-\,,\]
where $x$ simply denotes the multiplication by $x$, and $T_{\pm}f(x)=f(x\pm1)$. The Racah algebra encodes the commutation relations between these two operators \cite{GVZ14,GVZ14b}.
\begin{defi}\label{defRacah}
The Racah algebra is generated by $X,Y$ satisfying:
\[\begin{array}{l}
[X,Y]=Z\,,\\[0.5em]
[X,Z]=2X^2+2\{X,Y\}+a_1X+a_2Y+a_3\ ,\\[0.5em]
[Y,Z]=-2Y^2-2\{X,Y\}-a_1Y-b_2X-b_3\ ,
\end{array}\]
where
\[\begin{array}{l} a_1=(\la_2 - 2N) (\la_1 +\la_2 +\la_3 + N - 1) - \la_2 (N + 1) - \la_1 \la_3\,,\\[1em]
a_2=(\lambda_1+\lambda_2)(\lambda_1+\lambda_2-2)\,,\ \ \ \ a_3=-\lambda_2 N (\lambda_1 + \lambda_2 + \lambda_3 + N-1)(\lambda_1 + \lambda_2-2) \ ,\\[1em]
\displaystyle b_2=(\lambda_2+\lambda_3)(\lambda_2+\lambda_3-2)\,,\ \ \ \ \ \displaystyle b_3= -\lambda_2 N (\lambda_1 + \lambda_2 + \lambda_3 + N-1)(\lambda_2 + \lambda_3-2)\ .
\end{array}\]
\end{defi}
Note that the first relation defines $Z$ in terms of $X$ and $Y$. By construction, the Racah algebra is realized by the operators coming from the Racah polynomials, namely, a straightforward calculation shows that the following is a representation of the Racah algebra:
\[X\mapsto X_{Racah}\ \ \ \ \text{and}\ \ \ \ \ Y\mapsto Y_{Racah}\,.\]

\paragraph{The Racah algebra and transition coefficients.} Let $X,Y$ be two linear operators on an $(N+1)$-dimensional vector space such that:
\[Sp(X)=\{l(l+\la_1+\la_2-1)\}_{l=0,\dots,N}\ \ \ \ \text{and}\ \ \ \ \text{$X,Y$ satisfy the Racah algebra.}\]
These assumptions are enough to deduce that the transition coefficients between eigenbases of $X$ and $Y$ are expressed in terms of Racah polynomials. More precisely, we can use the relations of the Racah algebra to deduce the following properties.
\begin{prop}\label{prop-Racah}
We can find two bases $\{\tilde{v}_l\}_{i=0,\dots,N}$ and $\{\tilde{w}_l\}_{i=0,\dots,N}$ such that the actions of $X$ and $Y$ are given by:
\[X\tilde{v}_l=l(l+\la_1+\la_2-1)\tilde{v}_l\ \ \ \text{and}\ \ \ Y\tilde{v}_l=B(l-1)\tilde{v}_{l-1}+M(l)\tilde{v}_l+D(l+1)\tilde{v}_{l+1}\ ,\]
\[Y\tilde{w}_l=l(l+\la_2+\la_3-1)\tilde{w}_l\ \ \ \text{and}\ \ \ X\tilde{w}_l=C_l\tilde{w}_{l-1}+N_l\tilde{w}_l+A_l\tilde{w}_{l+1}\ .\]
Moreover, the transition coefficients are given by:
\begin{equation}\label{transition-Racah}
\tilde{w}_l=\sum_{k=0}^NR_{l,k}\tilde{v}_k\ \ \ \text{and}\ \ \ \tilde{v}_l=\Gamma^{-1}\sum_{k=0}^N\tilde{R}_{k,l}\tilde{w}_k\ ,
\end{equation}
where $\Gamma$ is a constant given by:
\[\Gamma=\sum_{k=0}^N\frac{B(0)\dots B(k-1)}{D(1)\dots D(k)}=\sum_{k=0}^N\frac{A_0\dots A_{k-1}}{C_1\dots C_k}=\frac{(\la_1+\la_2)_N(\la_2+\la_3)_N}{(\la_1)_N(\la_3)_N}\ .\]
\end{prop}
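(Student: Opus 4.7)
The plan is to mirror the proof of Proposition \ref{prop-Hahn}, adapting each step to the Racah setting. First, starting from an eigenbasis $\{\tilde{v}_l\}_{l=0,\dots,N}$ of $X$ with the prescribed eigenvalues $\mu_l:=l(l+\la_1+\la_2-1)$, I would write $Y\tilde{v}_l=\sum_i y_{i,l}\tilde{v}_i$. The commutation relation $[X,Z]=2X^2+2\{X,Y\}+a_1X+a_2Y+a_3$, combined with the fact that $[X,Z]=[X,[X,Y]]$ acts on the coefficient $y_{i,l}$ by $(\mu_i-\mu_l)^2$, pins down $y_{i,l}$ to vanish except for $i\in\{l-1,l,l+1\}$ (using that the $\mu_l$ are all distinct under \eqref{cond-Racah}) and forces $y_{l,l}=M(l)$ by matching the diagonal.

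Next, the $[Y,Z]$ relation yields a recursion on $l$ for the quantity $y_{l+1,l}y_{l,l+1}-y_{l,l-1}y_{l-1,l}$; starting from the trivial base case, a direct check confirms the unique solution is $B(l)D(l+1)$. Since \eqref{cond-Racah} ensures these products are nonzero for $l=0,\dots,N-1$, a diagonal rescaling of the $\tilde{v}_l$ lets us split this product as $y_{l+1,l}=D(l+1)$ and $y_{l,l+1}=B(l)$, giving the required action of $Y$.

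Then, defining $\tilde{w}_l:=\sum_{k=0}^N R_{l,k}\tilde{v}_k$, the eigenequation $Y\tilde{w}_l=l(l+\la_2+\la_3-1)\tilde{w}_l$ follows directly from the difference equation for the Racah polynomials (the boundary condition $B(N)=0$ absorbing the would-be $k=N+1$ term), while the three-term action of $X$ follows from the recurrence relation in $x(x+\la_1+\la_2-1)$. For the reverse formula, the same kind of uniqueness argument as in the Hahn case applies: the action of $X,Y$ determines the basis $\{\tilde{v}_l\}$ up to one overall scalar, so constructing candidate vectors $\tilde{v}'_l:=\sum_k \tilde{R}_{k,l}\tilde{w}_k$ and verifying (via the recurrence of $R_k$ for $X$ and the difference equation for $Y$, together with the symmetry $A_k=B(k)|_{\la_1\leftrightarrow\la_3}$) that they obey the same actions forces $\tilde{v}_l=\Gamma^{-1}\tilde{v}'_l$ for a common scalar $\Gamma$. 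Its two expressions as sums are then obtained by extracting respectively the coefficient of $\tilde{v}_0$ in $\tilde{w}_0=\sum_k\tilde{v}_k$ and the coefficient of $\tilde{w}_0$ in $\tilde{v}_0=\Gamma^{-1}\tilde{v}'_0$; the second sum is the $\la_1\leftrightarrow\la_3$ image of the first by the symmetry just mentioned.

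The main obstacle is evaluating $\Gamma$ in closed form as $\frac{(\la_1+\la_2)_N(\la_2+\la_3)_N}{(\la_1)_N(\la_3)_N}$. Using \eqref{ABCD-Racah}, this reduces to a summation identity which, unlike its Hahn counterpart \eqref{sum-Gam-Hahn} (a direct instance of Chu--Vandermonde), is a Saalschützian ${}_4F_3$-type sum and is substantially more delicate; I would expect to need the Pfaff--Saalschütz identity or a reindexing bringing it to a known balanced form. An appealing alternative that sidesteps the direct combinatorial identity entirely would be to identify $\Gamma$ representation-theoretically in the $V_{\la_1}\otimes V_{\la_2}\otimes V_{\la_3}$ model to be developed in the sequel: one expects that, after fixing the intermediate tensor-product decomposition coming from the Hahn analysis of the previous section, $\Gamma$ factorises into two Hahn-type constants, each of the form $\frac{(\mu+\nu)_M}{(\nu)_M}$, whose product then matches the desired expression.
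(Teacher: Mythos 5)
Your argument reproduces the paper's proof almost verbatim for everything except the closed-form evaluation of $\Gamma$: the paper's proof of this proposition literally consists of the sentence that it ``follows the exact same lines as for Proposition \ref{prop-Hahn}'' --- i.e.\ tridiagonality of $Y$ in an eigenbasis of $X$ from the $[X,Z]$ relation, the recursion for $y_{l+1,l}y_{l,l+1}$ from the $[Y,Z]$ relation, the diagonal rescaling, the definition of $\tilde{w}_l$ via the difference and recurrence equations of the Racah polynomials, and the uniqueness argument yielding the inverse formula --- all exactly as you describe. The one place where you genuinely diverge is the value of $\Gamma$, and there your proposal is weaker. The paper's trick is to compose the two relations \eqref{transition-Racah} in both orders (as in Remark \ref{rema-ortho-Hahn}), which produces the orthogonality relations $\sum_{k}R_{k,l}\tilde{R}_{k,l'}=\delta_{l,l'}\Gamma$ and thereby identifies $\Gamma$ with the normalisation constant of the Racah orthogonality relation, a tabulated quantity quoted from \cite{KLS10}; no hypergeometric summation is carried out at all. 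Your first suggested route is off target: converting $\sum_k B(0)\cdots B(k-1)/(D(1)\cdots D(k))$ to hypergeometric form via \eqref{ABCD-Racah} yields a terminating \emph{well-poised} ${}_4F_3(1)$ (each numerator parameter pairs with a denominator parameter to the common sum $\la_1+\la_2$), not a Saalsch\"utzian one, so Pfaff--Saalsch\"utz does not apply as directly as you hope. Your representation-theoretic alternative (factorising $\Gamma$ into two Hahn-type constants through the intermediate decomposition of $V_{\la_1}\otimes V_{\la_2}\otimes V_{\la_3}$) is an appealing idea consistent with the later sections of the paper, but you do not carry it out. So, as written, the closed form $\Gamma=\frac{(\la_1+\la_2)_N(\la_2+\la_3)_N}{(\la_1)_N(\la_3)_N}$, which is part of the statement, is not actually established; the cleanest fix is the paper's: derive the orthogonality relations from \eqref{transition-Racah} and read off $\Gamma$ as the known Racah normalisation constant. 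Everything else in your proposal is correct.
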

\begin{proof}
The proof follows the exact same lines as for Proposition \ref{prop-Hahn} for the Hahn algebra, up to the final equality for $\Gamma$. As in Remark \ref{rema-ortho-Hahn}, we can apply the two relations in (\ref{transition-Racah}) successively  and we deduce the orthogonality relations for the Racah polynomials:
\[\sum_{k=0}^NR_{k,l}\tilde{R}_{k,l'}=\delta_{l,l'}\Gamma\ \ \ \ \ \text{and}\ \ \ \ \ \sum_{l=0}^NR_{k,l}\tilde{R}_{k',l}=\delta_{k,k'}\Gamma\ .\]
The formula for $\Gamma$ can thus be found in \cite{KLS10}.
\end{proof}
Note that the formula for $\Gamma$ involving the coefficients $A_i,C_i$ together with (\ref{ABCD-Racah}), leads to a non-trivial equality, generalising the one in the Hahn case (\ref{sum-Gam-Hahn}):
\[\sum_{l=0}^N\binom{N}{l}\frac{(\la_2)_l(\la_1+\la_2+\la_3+N-1)_{l}}{(\la_1)_l(\la_3+N-l)_l(\la_1+\la_2+l-1)_l(\la_1+\la_2+2l)_{N-l}}=\frac{(\la_2+\la_3)_N}{(\la_1)_N(\la_3)_N}\ .\]

\begin{rema}
The two operators $X$ and $Y$ form a Leonard pair \cite{Ter01}, that is, each one of them is tridiagonal in an eigenbasis of the other. This was also the case in the Hahn situation in Proposition \ref{prop-Hahn}. More general results in the same spirit than Proposition \ref{prop-Racah} on the representations of the Racah algebra can be found \cite{GVZ14,GVZ14b}. We only stated the version that we needed, namely assuming from the beginning the knowledge of the spectrum of $X$.
\end{rema}

\subsection{Realisation of the Racah algebra in $U(sl_2)^{\otimes 3}$.}
We need various embeddings of the Casimir element $C$ in $U(sl_2)^{\otimes 3}$. We refer to the next section for the definition of more general intermediate Casimir elements in the $n$-fold tensor product of $U(sl_2)$, of which the following are particular cases:
\[C_{12}=\delta(C)\otimes 1\,,\ \ \ \ \ C_{23}=1\otimes\delta(C)\ \ \ \ \ \text{and}\ \ \ \ \ C_{123}=(\delta\otimes \text{Id})\circ\delta(C)\ .\]
Since $C\otimes 1$ is central in $U(sl_2)^{\otimes 2}$, it commutes in particular with $\delta(C)$, and applying $\delta\otimes \text{Id}$ we get that $C_{123}$ and $C_{12}$ commute. A similar reasoning shows that $C_{123}$ commutes also with $C_{23}$. 

To identify efficiently the Racah algebra, we follow the same lines as we have done for the Hahn algebra in the previous section, and we consider the quotient of $U(sl_2)^{\otimes 3}$ where the central elements are specialized:
\[C\otimes 1 \otimes 1=\frac{\lambda_1(\lambda_1-2)}{4}\,,\ \ \ \ 1\otimes C\otimes  1=\frac{\lambda_2(\lambda_2-2)}{4}\ \ \ \ \text{and}\ \ \ \ 1\otimes 1\otimes C =\frac{\lambda_3(\lambda_3-2)}{4}\ .\]
We define the following elements of $U(sl_2)^{\otimes 3}$:
\begin{equation}\label{abstract-realisation2}
\begin{array}{c}\displaystyle X=C_{12}-\frac{(\lambda_1+\lambda_2)(\lambda_1+\lambda_2-2)}{4}\,,\ \ \ \ \ Y=C_{23}-\frac{(\lambda_2+\lambda_3)(\lambda_2+\lambda_3-2)}{4}\,,\\[0.8em]
\displaystyle C'_{123}=C_{123}-\frac{(\lambda_1+\lambda_2+\la_3)(\lambda_1+\lambda_2+\la_3-2)}{4}\ .
\end{array}
\end{equation}
 The choices made here are in order to recover immediately and in the clearest possible way the Racah algebra as formulated above. 
Indeed, the Racah algebra is found in the afore-mentioned specialization of $U(sl_2)^{\otimes 3}$. A straightforward and classical calculation gives:
\begin{prop}\label{prop:realRacah}
The subalgebra generated by $X,Y$ and $C'_{123}$ realises the Racah algebra of Definition \ref{defRacah} when $C'_{123}$ is replaced as follows:
\begin{equation}\label{eq:CpvsN}
C'_{123}=N(\la_1+\la_2+\la_3+N-1)\ .
\end{equation}
\end{prop}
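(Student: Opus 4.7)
The plan is to verify the three defining relations of Definition~\ref{defRacah} by direct computation in the specialized quotient of $U(sl_2)^{\otimes 3}$, using the standard expansion
\[\delta(C) = C\otimes 1 + 1\otimes C + \tfrac{1}{2}H\otimes H + E\otimes F + F\otimes E.\]
A preliminary point is to check that $C'_{123}$ is central in the subalgebra generated by $X$ and $Y$, i.e.\ $[C_{12},C_{123}]=0=[C_{23},C_{123}]$. This follows formally from the coassociativity $(\delta\otimes\text{Id})\delta = (\text{Id}\otimes\delta)\delta$ together with the centrality of $C$ in $U(sl_2)$: writing $C_{123}=(\delta\otimes\text{Id})\delta(C)$ and expanding, every term arising in $[\delta(C)\otimes 1,(\delta\otimes\text{Id})\delta(C)]$ collapses via the identity $[\delta(C),\delta(a)]=0$ in $U(sl_2)^{\otimes 2}$, and symmetrically for $C_{23}$ by using $C_{123}=(\text{Id}\otimes\delta)\delta(C)$. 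This centrality is precisely what allows the substitution of $C'_{123}$ by a scalar via \eqref{eq:CpvsN}.

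The first Racah relation is merely the definition $Z=[X,Y]=[C_{12},C_{23}]$. For the two remaining relations, I would compute $[X,Z]$ and $[Y,Z]$ by expanding $Z$ explicitly in the generators of $sl_2$ distributed over the three tensor factors and bracketing once more with $C_{12}$, respectively $C_{23}$. The output must land back in the subalgebra spanned by $1, C_{12}, C_{23}, C_{123}, C_{12}^2$ and $\{C_{12},C_{23}\}$, which one enforces by systematically rewriting monomials via $FE = C - (H^2+2H)/4$ in each tensor slot (with the Casimirs of single factors specialized to scalars). Matching the result against $2X^2+2\{X,Y\}+a_1X+a_2Y+a_3$ and $-2Y^2-2\{X,Y\}-a_1Y-b_2X-b_3$ then fixes the coefficients, with every occurrence of $N$ absorbed into $C'_{123}$ via $N(\la_1+\la_2+\la_3+N-1)=C'_{123}$; for instance the apparently linear term $-\la_2(N+1)$ inside $a_1$ cancels against part of $(\la_2-2N)(\la_1+\la_2+\la_3+N-1)$, leaving only polynomial expressions in the $\la_i$ and the central element $C'_{123}$.

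The main obstacle is the bookkeeping: the nested commutators produce many tensor-monomials before they recollapse into the intermediate Casimirs. What makes the computation tractable is the general fact that the intermediate-Casimir subalgebra of $U(sl_2)^{\otimes 3}$ is closed under these brackets — this is essentially the content of the Racah algebra — so the outcome is \emph{a priori} a polynomial expression in $X, Y, C'_{123}$; only the coefficients remain to be identified. As a cross-check, once a candidate identity is written down, both sides can be evaluated on a faithful representation such as $V_{\la_1}\otimes V_{\la_2}\otimes V_{\la_3}$ for generic $\la_i$, where the action of $X$ and $Y$ matches the Racah-polynomial operators $X_{Racah}, Y_{Racah}$ used to define the algebra, reducing the check to an equality of rational functions in the parameters.
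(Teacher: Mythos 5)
Your proposal is correct and matches the paper's approach: the paper offers no detailed proof, stating only that the result follows from ``a straightforward and classical calculation,'' which is exactly the direct verification you outline (expanding $\delta(C)$, reducing via the specialized single-factor Casimirs, and matching coefficients), together with the centrality of $C'_{123}$ that the paper establishes just beforehand by the same coassociativity argument you give.
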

Note that $C'_{123}$ is a central element (it commutes with $X$ and $Y$) and so we may treat it as a parameter. The formula in the proposition above is thus seen as a reparametrisation. Of course, in the representations we will be considering, the element $C'_{123}$ will be diagonalisable with eigenvalues given by the formula above with $N$ taking positive integer values. In other words, the proposition above says that the genuine Racah algebra of Definition \ref{defRacah} is realised as acting in the eigenspace of $C'_{123}$ with eigenvalue given above for a positive integer $N$. Note that when $C'_{123}$ has eigenvalue as in the proposition, this corresponds to the following (more recognisable) value of the total Casimir $C_{123}$:
\[C_{123}=\frac{(\lambda_1+\lambda_2+\lambda_3+2N)(\lambda_1+\lambda_2+\lambda_3+2N-2)}{4}\ .\]

\subsection{Racah algebra, Jacobi polynomials and convolution identities}

\paragraph{Tensor product of three lowest-weight Verma modules of $sl_2$.} We keep the realisation of the Verma module $V_{\lambda}$ of lowest-weight $\lambda$ on polynomials as in the previous section in \eqref{realisationVerma}. Then the tensor product $V_{\la_1}\otimes V_{\la_2}\otimes V_{\la_3}$ is realised on the space $Pol(x,y,z)$. The algebra $U(sl_2)^{\otimes 3}$ acts on this space, each copy acting, respectively, on each variable $x,y,z$. Of course, the diagonal embedding of $sl_2$ into $U(sl_2)^{\otimes 3}$ acts as follows:
\begin{equation}\label{eq:Modele(x,y,z)}
\begin{array}{lcl}
H\otimes 1\otimes 1+1\otimes H\otimes 1+ 1\otimes 1\otimes H & \mapsto & \la_1+\la_2+\la_3+2x\partial_x+2y\partial_y+2z\partial_z\ ,\\[0.2em]
E\otimes 1\otimes 1\,+1\otimes E\otimes 1\,+ 1\otimes 1\otimes E & \mapsto & x+y+z\ ,\\[0.2em]
F\otimes 1\otimes 1\,+1\otimes F\otimes 1\,+ 1\otimes 1\otimes F & \mapsto  & -x\partial_x^2-y\partial^2_y-z\partial^2_z-\la_1\partial_x-\la_2\partial_y-\la_3\partial_z\ .
\end{array}
\end{equation}

\paragraph{Changes of variables.} As for the Hahn algebra, we need to change the model in order to find naturally the eigenvectors for the elements realising the Racah algebra. It turns out that we will need different changes of variables, depending on which elements we want to diagonalise. We use the following changes of coordinates 
\begin{equation}\label{tv-Racah1}
(t,u_1,u_2)=(x+y+z,\ \frac{y-x}{x+y},\ \frac{z-(x+y)}{x+y+z})\ .
\end{equation}
\begin{equation}\label{tv-Racah2}
(t,v_1,v_2)=(x+y+z,\ \frac{z-y}{y+z},\ \frac{(y+z)-x}{x+y+z})\ .
\end{equation}
Note that the space of polynomials in $x,y,z$ becomes, respectively, the space of polynomials in $t,tu_2,tu_1(1-u_2)$ and the space of polynomials in $t,tv_2,tv_1(1+v_2)$.

Thanks to these changes of variables, we can express every elements of the Racah algebra using various Jacobi operators $\Phi^{\la,\la'}_v$ from Section \ref{secnotations}.
\begin{prop}\label{prop:actionRacah}
Using the first set of variables $(t,u_1,u_2)$, we have:
\[X\mapsto \Phi^{\la_1,\la_2}_{u_1},\ \ \ \ C'_{123}\mapsto\Phi^{\la_1+\la_2,\la_3}_{u_2}+\frac{2}{1-u_2}\Phi_{u_1}^{\la_1,\la_2}\,.\]
Using the second set of variables $(t,v_1,v_2)$, we have:
\[Y\mapsto \Phi^{\la_2,\la_3}_{v_1},\ \ \ \ C'_{123}\mapsto\Phi^{\la_1,\la_2+\la_3}_{v_2}+\frac{2}{1+v_2}\Phi_{v_1}^{\la_2,\la_3}\,.\]
\end{prop}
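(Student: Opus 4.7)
The plan is to apply Proposition~\ref{propmodele(t,v)} in two nested stages, corresponding to the two halves of each change of variables. I treat the first change \eqref{tv-Racah1} in detail; the second is entirely parallel.

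First, I perform the partial substitution $(x,y)\to(s_1,u_1)$ with $s_1=x+y$, $u_1=(y-x)/s_1$, leaving $z$ untouched. Proposition~\ref{propmodele(t,v)} applied to the $(x,y)$-factor yields $C_{12}\mapsto \tfrac{(\lambda_1+\lambda_2)(\lambda_1+\lambda_2-2)}{4}+\Phi^{\lambda_1,\lambda_2}_{u_1}$ and furnishes the images of $\delta(H)\otimes 1$, $\delta(E)\otimes 1$, $\delta(F)\otimes 1$ in $(s_1,u_1)$-coordinates. Combined with the untouched $V_{\lambda_3}$-action on $z$, this produces $(s_1,u_1,z)$-expressions for $\delta_3(H)$, $\delta_3(E)$, $\delta_3(F)$ whose only deviation from the two-variable formulas of Proposition~\ref{propmodele(t,v)} is the extra summand $s_1^{-1}\Phi^{\lambda_1,\lambda_2}_{u_1}$ inside $\delta_3(F)$. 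Since $X=C_{12}-\tfrac{(\lambda_1+\lambda_2)(\lambda_1+\lambda_2-2)}{4}$ contains no derivative in $s_1$ or $z$, the formula $X\mapsto\Phi^{\lambda_1,\lambda_2}_{u_1}$ already follows at this stage.

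Then I perform the outer change $(s_1,z)\to(t,u_2)$ exactly as in Proposition~\ref{propmodele(t,v)}, viewing the $(s_1,z)$-part of the $sl_2$-action as a ``tensor product'' of weights $\lambda_1+\lambda_2$ and $\lambda_3$. All terms not involving the extra $s_1^{-1}\Phi^{\lambda_1,\lambda_2}_{u_1}$ reassemble verbatim as in that proposition, producing the scalar $\tfrac{(\lambda_1+\lambda_2+\lambda_3)(\lambda_1+\lambda_2+\lambda_3-2)}{4}$ together with $\Phi^{\lambda_1+\lambda_2,\lambda_3}_{u_2}$ inside the total Casimir. The extra summand, multiplied by $\delta_3(E)=s_1+z=t$ in the product $\delta_3(F)\delta_3(E)$, contributes $(t/s_1)\,\Phi^{\lambda_1,\lambda_2}_{u_1}=\tfrac{2}{1-u_2}\Phi^{\lambda_1,\lambda_2}_{u_1}$, using $s_1=t(1-u_2)/2$. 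Subtracting the constant yields the announced expression for $C'_{123}$.

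For the second set of variables, I apply the same two-stage argument to $(y,z)\to(s_2,v_1)$ followed by $(x,s_2)\to(t,v_2)$. Proposition~\ref{propmodele(t,v)} gives $Y\mapsto\Phi^{\lambda_2,\lambda_3}_{v_1}$ immediately, and the Casimir computation is identical except that now $s_2=y+z=t(1+v_2)/2$, the sign convention in $v_2=((y+z)-x)/t$ being opposite to that of $u_2$; this replaces the factor $(1-u_2)^{-1}$ by $(1+v_2)^{-1}$. The outer Jacobi operator carries labels $\lambda_1$ and $\lambda_2+\lambda_3$ in this order because $V_{\lambda_1}$ is now the ``first'' factor in the outer tensor product. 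The only delicate bookkeeping is tracking how the inner operator $s_1^{-1}\Phi^{\lambda_1,\lambda_2}_{u_1}$ (respectively $s_2^{-1}\Phi^{\lambda_2,\lambda_3}_{v_1}$) survives the outer change of variables and gets rescaled by $\delta_3(E)$; this is the main point deserving care, and it is essentially a rerun of the computation already performed in the proof of Proposition~\ref{propmodele(t,v)}.
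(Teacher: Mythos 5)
Your proposal is correct and follows essentially the same route as the paper's own proof: a two-stage change of variables, applying Proposition \ref{propmodele(t,v)} first to the inner pair and then to the outer pair, and tracking the surviving term $s_1^{-1}\Phi^{\la_1,\la_2}_{u_1}$ through the product $FE$ via the substitution $s_1=t(1-u_2)/2$ (respectively $s_2=t(1+v_2)/2$). No gaps; the sign and parameter-ordering bookkeeping you flag is exactly the point the paper also singles out.
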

\begin{proof}
Let us do the first change of variables in two steps. We start with the first step, which replaces $(x,y)$ by $t_1=x+y$ and $u_1=\frac{y-x}{x+y}$ (and leaves $z$ as it is). At this stage, we already know from the calculations made in two variables in Proposition \ref{propmodele(t,v)} that the operators $X$ becomes $\Phi^{\la_1,\la_2}_{u_1}$. Those calculations also show that the diagonal embedding of $sl_2$ into $U(sl_2)^{\otimes 3}$ now acts as follows:
\[\begin{array}{lcl}
H\otimes 1\otimes 1+1\otimes H\otimes 1+ 1\otimes 1\otimes H & \mapsto & \la_1+\la_2+\la_3+2t_1\partial_{t_1}+2z\partial_z\ ,\\[0.2em]
E\otimes 1\otimes 1\,+1\otimes E\otimes 1\,+ 1\otimes 1\otimes E & \mapsto & t_1+z\ ,\\[0.2em]
F\otimes 1\otimes 1\,+1\otimes F\otimes 1\,+ 1\otimes 1\otimes F & \mapsto  & -t_1\partial_{t_1}^2-(\la_1+\la_2)\partial_{t_1}+t_1^{-1}\Phi_{u_1}^{\la_1,\la_2}-z\partial^2_z-\la_3\partial_z\ .
\end{array}\]
Now we perform the second step of the change of variables, which replaces $(t_1,z)$ by $t=t_1+z$ and $u_2=\frac{z-t_1}{t_1+z}$ (and does not touch $u_1$). We see that the same two-variables calculation as before gives:
\[\begin{array}{lcl}
H\otimes 1\otimes 1+1\otimes H\otimes 1+ 1\otimes 1\otimes H & \mapsto & \la_1+\la_2+\la_3+2t\partial_{t}\ ,\\[0.2em]
E\otimes 1\otimes 1\,+1\otimes E\otimes 1\,+ 1\otimes 1\otimes E & \mapsto & t\ ,\\[0.2em]
F\otimes 1\otimes 1\,+1\otimes F\otimes 1\,+ 1\otimes 1\otimes F & \mapsto  & -t\partial_{t}^2-(\la_1+\la_2+\la_3)\partial_{t}+t_1^{-1}\Phi_{u_1}^{\la_1,\la_2}+t^{-1}\Phi_{u_2}^{\la_1+\la_2,\la_3}\ ,
\end{array}\]
where we have not modified the term $t_1^{-1}\Phi_{u_1}^{\la_1,\la_2}$ in the action of $F$. To really conclude the change of variables, we need to replace $t_1$ in this term using the new variables. The formula is $t_1=\frac{1}{2}t(1-u_2)$. From the above formulas for the action of $H,E,F$ it is immediate to calculate the image of $C_{123}$. Indeed the terms containing only $t$ give, using the well-known calculation in one variable, the scalar $\frac{1}{4}(\la_1+\la_2+\la_3)(\la_1+\la_2+\la_3-2)$. This is exactly what we substract from $C_{123}$ to get $C'_{123}$. The remaining terms (in the product of the image of $E$ and $F$) give the claimed formula for $C'_{123}$.

The reasoning is completely parallel for the other change of variables, starting with the change of variables which replaces $(y,z)$ by $t_1=y+z$ and $v_1=\frac{z-y}{y+z}$ (and does not touch $x$). We skip the details which are exactly the same as above. The only difference in the result is the sign in front of $v_2$, which comes from the fact that now we have $t_1=\frac{1}{2}t(1+v_2)$.
\end{proof}

\paragraph{Eigenvectors of the Racah algebra.} Now that we have the explicit form of the operators $X,Y$ and $C'_{123}$ in terms of Jacobi operators, we can look for their eigenvectors, which naturally, are going to be expressed in terms of Jacobi polynomials. Recall that we look for the common eigenvectors of $X$ and $C'_{123}$ on one hand, and the common eigenvectors of $Y$ and $C'_{123}$ on the other hand.

Using the new variables in \eqref{tv-Racah1} and \eqref{tv-Racah2}, we define the two families of polynomials:
\begin{equation}\label{eq:vwtuv}
\begin{array}{l}v_l=\displaystyle\frac{t^N}{2^l}(1-u_2)^lP_{N-l}^{\la_1+\la_2+2l,\la_3}(u_2)P_l^{\la_1,\la_2}(u_1)\,,\\[0.8em]
w_l=\displaystyle\frac{t^N}{2^l}(1+v_2)^lP_{N-l}^{\la_1,\la_2+\la_3+2l}(v_2)P_l^{\lambda_2,\lambda_3}(v_1)\,,\end{array}\ \ \ \ l=0,1,\dots,N\,.
\end{equation}
or in the original variables $x,y,z$:
\begin{equation}\label{eq:vwxyz}
\begin{array}{l}v_l=(x+y+z)^{N-l}(x+y)^lP_{N-l}^{\la_1+\la_2+2l,\la_3}(\frac{z-(x+y)}{x+y+z})P_l^{\la_1,\la_2}(\frac{y-x}{x+y})\,,\\[0.8em]
w_l=(x+y+z)^{N-l}(y+z)^lP_{N-l}^{\la_1,\la_2+\la_3+2l}(\frac{y+z-x}{x+y+z})P_l^{\lambda_2,\lambda_3}(\frac{z-y}{y+z})\,,\end{array}\ \ \ \ l=0,1,\dots,N\,.
\end{equation}
\begin{prop}\label{prop:eigenvectorsRacah3}
The vectors $v_l$ and $w_l$ are eigenvectors for the operators $X$ and $Y$ satisfying the Racah algebra:
\[Xv_l=l(l+\lambda_1+\lambda_2-1)v_l\ \ \ \ \text{and}\ \ \ \ Yw_l=l(l+\lambda_2+\lambda_3-1)w_l\ .\]
and both families $\{v_l\}_{l=0,\dots,N}$ and $\{w_l\}_{l=0,\dots,N}$ are eigenvectors of $C'_{123}$:
\[C'_{123}v_l=N(\la_1+\la_2+\la_3+N-1)v_l\ \ \ \ \text{and}\ \ \ \ C'_{123}w_l=N(\la_1+\la_2+\la_3+N-1)w_l\ .\]
\end{prop}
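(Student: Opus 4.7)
The plan is to use the explicit formulas from Proposition \ref{prop:actionRacah} and the Jacobi eigenvalue equation from Section \ref{secnotations}, together with a short shift identity for Jacobi operators, to reduce everything to a direct computation.

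First I would compute $Xv_l$. In the coordinates $(t,u_1,u_2)$ the operator $X$ acts as $\Phi^{\lambda_1,\lambda_2}_{u_1}$, which only differentiates with respect to $u_1$. All factors of $v_l$ other than $P_l^{\lambda_1,\lambda_2}(u_1)$ are constant in $u_1$ and hence pass through $X$. The Jacobi eigenvalue equation then gives immediately $Xv_l=l(l+\lambda_1+\lambda_2-1)v_l$. The identity $Yw_l=l(l+\lambda_2+\lambda_3-1)w_l$ follows in exactly the same way using the $(t,v_1,v_2)$-model.

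The core of the proof is the eigenvalue of $C'_{123}$ on $v_l$. Writing $\mu=\lambda_1+\lambda_2$, $\nu=\lambda_3$ and $g(u)=P_{N-l}^{\mu+2l,\nu}(u)$, I first establish the shift identity
\[
\Phi^{\mu,\nu}_{u}\bigl[(1-u)^l g(u)\bigr]=(1-u)^l\,\Phi^{\mu+2l,\nu}_u g(u)-l\bigl[(l-1+\mu)(1+u)-\nu(1-u)\bigr](1-u)^{l-1}g(u)\,.
\]
This is a direct calculation from the definition \eqref{Jacobioperator} of $\Phi^{\mu,\nu}_u$: expand $\partial_u$ and $\partial_u^2$ via the Leibniz rule, use $(u^2-1)=-(1-u)(1+u)$, and observe that the coefficient of $g'$ equals $(\mu+2l-\nu)+(\mu+2l+\nu)u$, which is exactly the first-order coefficient of $\Phi^{\mu+2l,\nu}_u$; this identifies the first term on the right-hand side, and the rest is the remainder. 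Applied to $g=P_{N-l}^{\mu+2l,\nu}$ and using $\Phi^{\mu+2l,\nu}_u g=(N-l)(N+l+\mu+\nu-1)g$, this yields
\[
\Phi^{\mu,\nu}_{u_2}v_l=\bigl[(N-l)(N+l+\mu+\nu-1)+l\nu\bigr]v_l-\frac{l(l-1+\mu)(1+u_2)}{1-u_2}v_l\,.
\]

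Next I would add the contribution of the second summand of $C'_{123}$ in Proposition \ref{prop:actionRacah}. Using the already computed eigenvalue $\Phi^{\lambda_1,\lambda_2}_{u_1}v_l=l(l+\mu-1)v_l$, I get
\[
\frac{2}{1-u_2}\Phi^{\lambda_1,\lambda_2}_{u_1}v_l=\frac{2l(l+\mu-1)}{1-u_2}v_l\,,
\]
and the sum of the two $u_2$-dependent fractions collapses because $-(1+u_2)+2=1-u_2$, leaving the $u_2$-free constant $l(l-1+\mu)$. A short algebraic simplification of $(N-l)(N+l+\mu+\nu-1)+l\nu+l(l-1+\mu)$ yields $N(N+\lambda_1+\lambda_2+\lambda_3-1)$, which is the desired eigenvalue.

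The proof of the two identities for $w_l$ is entirely parallel: under the change $(t,v_1,v_2)$, the same shift computation applies with $(1-u)$ replaced by $(1+v_2)$ (equivalently, replace $u_2\mapsto -v_2$), the Jacobi parameters in the outer factor become $(\lambda_1,\lambda_2+\lambda_3)$, the operator $Y$ plays the role previously played by $X$, and the sign change $(1+u_2)\leftrightarrow (1-v_2)$ is exactly compensated by the sign change in $\frac{2}{1+v_2}$ versus $\frac{2}{1-u_2}$. The only step requiring any care is the shift identity and the verification that its remainder is cancelled by the cross-term coming from the $\frac{2}{1\mp\,\cdot}\Phi$ summand; once that lemma-sized computation is in place the proposition follows from substitutions.
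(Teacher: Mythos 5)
Your proposal is correct and follows essentially the same route as the paper: the eigenvalue equations for $X$ and $Y$ follow at once from Proposition \ref{prop:actionRacah}, and the $C'_{123}$ computation rests on a shift identity for $\Phi^{\alpha,\beta}_u$ past $(1-u)^l$ that is exactly the paper's Lemma \ref{lemm:Jacobioperator} written in a slightly different but equivalent form, with the same cancellation of the $\frac{1+u_2}{1-u_2}$ remainder against the $\frac{2}{1-u_2}\Phi^{\la_1,\la_2}_{u_1}$ cross-term and the same reduction of the $w_l$ case via $u\mapsto -u$.
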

\begin{proof}
In view of Proposition \ref{prop:actionRacah}, in the new variables, the operator $X$ acts as $\Phi_{u_1}^{\la_1,\la_2}$. This shows at once that the vectors $v_l$ are eigenvectors for $X$ with the given eigenvalue.

Now to show that the vectors $v_l$ are also eigenvectors for $C'_{123}$, we look at the expression of $C'_{123}$ in terms of the new variables given in Proposition \ref{prop:actionRacah}. We need the following simple Lemma.
\begin{lemm}\label{lemm:Jacobioperator}
As operators on polynomials in $u$, we have for any $l\geq 0$:
\[\Phi_u^{\alpha,\beta}\circ (1-u)^l=(1-u)^l\circ \Bigl(\Phi_u^{\alpha+2l,\beta}+l\beta-l(l+\alpha-1)\frac{1+u}{1-u}\Bigr)\ .\]
\end{lemm}
\begin{proof}[Proof of the lemma]
This is a direct calculation using Leibniz rule. Starting from $\Phi_u^{\alpha,\beta}\circ (1-u)^l$ and moving the derivatives through $(1-u)^l$, one gets:
\[(1-u)^l\Phi_u^{\alpha,\beta}+(u^2-1)\Bigl(l(l-1)(1-u)^{l-2}-2l(1-u)^{l-1}\partial_u\Bigr)-l(1-u)^{l-1}\Bigl(\alpha-\beta+(\alpha+\beta)u\Bigr)\ .\]
The term with $\partial_u$ can enter the Jacobi operator since $\Phi_u^{\alpha,\beta}+2l(1+u)\partial_u=\Phi_u^{\alpha+2l,\beta}$. The remaining terms of degree 0 in $\partial_u$ organise as claimed.
\end{proof}
Now using this lemma, it is easy to apply the operator $\Phi_{u_2}^{\la_1+\la_2,\la_3}$ on $v_l$. We find:
\[\Phi_{u_2}^{\la_1+\la_2,\la_3}(v_l)=(N-l)(N+l+\la_1+\la_2+\la_3-1)v_l+l\la_3 v_l-l(l+\la_1+\la_2-1)\frac{1+u_2}{1-u_2}v_l\ .\]
The remaining part of the action of $C'_{123}$ gives immediately:
\[\frac{2}{1-u_2}\Phi_{u_1}^{\la_1,\la_2}(v_l)=l(l+\la_1+\la_2-1)\frac{2}{1-u_2}v_l\ .\]
Summing these two formulas, we find that only $N(N+\la_1+\la_2+\la_3-1)v_l$ remains, as claimed.

The proof of the formulas for the vectors $w_l$ is exactly similar, using $\Phi_{-u}^{\alpha,\beta}=\Phi_u^{\beta,\alpha}$.
\end{proof}

\paragraph{Convolution identities.} Now that we have identified the eigenvectors of $X$ and $Y$, we can apply the general properties of the Racah algebra discussed in Proposition \ref{prop-Racah} and deduce the following identities.
\begin{theo}\label{theo:ConvolutionRacah}
We have, for $l=0,1,\dots,N$,
\[w_l=\sum_{k=0}^n\binom{N}{l}\frac{(\lambda_2)_l(\lambda_1)_{N-l}(\lambda_1+\lambda_2+\lambda_3+N-1)_k}{(\lambda_1)_k(\lambda_1+\lambda_2+k-1)_k(\lambda_1+\lambda_2+2k)_{N-k}}R_{l,k}v_k\ ,\]
\[v_l=\sum_{k=0}^n\binom{N}{l}\frac{(\lambda_2)_l(\lambda_3)_{N-l}(\lambda_1+\lambda_2+\lambda_3+N-1)_k}{(\lambda_3)_k(\lambda_2+\lambda_3+k-1)_k(\lambda_2+\lambda_3+2k)_{N-k}}R_{k,l}w_k\ .\]
\end{theo}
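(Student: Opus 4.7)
The plan is to mirror the proof of Theorem \ref{theo:FormuleConvolutionExplicite-Hahn} in the Racah setting. Combining Proposition \ref{prop:realRacah} (together with the reparametrisation \eqref{eq:CpvsN}), Proposition \ref{prop:eigenvectorsRacah3}, and Proposition \ref{prop-Racah}, both families $\{v_l\}_{l=0,\ldots,N}$ and $\{w_l\}_{l=0,\ldots,N}$ lie in the $(N+1)$-dimensional eigenspace of $C'_{123}$ with eigenvalue $N(\lambda_1+\lambda_2+\lambda_3+N-1)$, on which $X,Y$ act as in Proposition \ref{prop-Racah}. Since the $X$-spectrum $\{l(l+\lambda_1+\lambda_2-1)\}_{l=0,\ldots,N}$ is simple there, each $v_l$ is a scalar multiple of the abstract $\tilde v_l$, and similarly $w_l=\beta_l\tilde w_l$. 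Writing $v_l=\alpha_l\tilde v_l$, the abstract identities \eqref{transition-Racah} translate into
\[
w_l=\sum_{k=0}^N\frac{\beta_l}{\alpha_k}R_{l,k}v_k,\qquad v_l=\Gamma^{-1}\sum_{k=0}^N\frac{\alpha_l}{\beta_k}\tilde R_{k,l}w_k,
\]
so the theorem reduces to evaluating the scalars $\alpha_l$ and $\beta_l$ up to a single global normalisation.

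To compute these scalars I would proceed by specialisation, as in the Hahn proof. Using $R_{0,k}=R_{k,0}=1$, the $l=0$ cases of the two identities read $w_0=\beta_0\sum_k\alpha_k^{-1}v_k$ and $v_0=\Gamma^{-1}\sum_k\beta_k^{-1}\tilde R_{k,0}w_k$. The vector $w_0=(x+y+z)^N P_N^{\lambda_1,\lambda_2+\lambda_3}(\tfrac{y+z-x}{x+y+z})$ is itself a Hahn-type eigenvector for the pair $(\lambda_1,\lambda_2+\lambda_3)$ in the ``variables'' $x$ and $y+z$, and likewise $v_0$ for the pair $(\lambda_1+\lambda_2,\lambda_3)$ in $x+y$ and $z$. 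Theorem \ref{theo:FormuleConvolutionExplicite-Hahn} expands each of $w_0,v_0$ in the appropriate monomial basis. Applying that same theorem to the outer Jacobi factors of the general $v_k$ and $w_k$ produces compatible expansions, and matching coefficients isolates $\beta_0/\alpha_k$ and $\alpha_0/\beta_k$. An auxiliary specialisation (for instance $z=0$, using $P_n^{\alpha,\beta}(-1)=\tfrac{(-1)^n(\beta)_n}{n!}$ to collapse one Jacobi factor) then fixes the remaining global constant $\alpha_0\beta_0$ via a sum of \eqref{Gam-Hahn}-type.

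The main obstacle is not conceptual but combinatorial: it lies in matching the resulting product of Pochhammer symbols, assembled from \eqref{ABCD-Racah}, the closed form of $\Gamma$ in Proposition \ref{prop-Racah}, and the Hahn coefficients of Theorem \ref{theo:FormuleConvolutionExplicite-Hahn}, with the particular combinations stated in the theorem. Once this bookkeeping is carried out, substitution into the two displayed identities completes the proof.
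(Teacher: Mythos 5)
Your overall strategy coincides with the paper's: reduce the theorem to Proposition \ref{prop-Racah} via Propositions \ref{prop:realRacah} and \ref{prop:eigenvectorsRacah3}, write $v_l=\alpha_l\tilde v_l$ and $w_l=\beta_l\tilde w_l$, and compute the normalisation ratios. Two remarks on your set-up. First, the eigenspace of $C'_{123}$ for the eigenvalue $N(\la_1+\la_2+\la_3+N-1)$ is infinite-dimensional (it contains all $t^nv_l$, $n\geq 0$); to obtain the $(N+1)$-dimensional space on which Proposition \ref{prop-Racah} applies you must intersect with the degree-$N$ eigenspace of $t\partial_t$, as the paper does, and the conditions \eqref{cond-Racah} are what guarantee that the eigenvalues $M(\la_1+\la_2+\la_3+M-1)$, $M\leq N$, are distinct so that the two families span the same subspace. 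Second, there is no residual constant $\alpha_0\beta_0$ to fix: Proposition \ref{prop-Racah} determines the pair of bases up to a single common scalar, and only the ratios $\beta_l\alpha_k^{-1}$ and $\alpha_l\beta_k^{-1}$ enter the identities, all of which are recoverable from the two families $\{\beta_0/\alpha_k\}$ and $\{\alpha_0/\beta_k\}$.

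Where you genuinely diverge from the paper --- and where the proposal is under-justified --- is the computation of the ratios. Expanding $w_0$ by Theorem \ref{theo:FormuleConvolutionExplicite-Hahn} produces a sum over the monomials $x^j(y+z)^{N-j}$, whereas expanding the outer Jacobi factor of $v_k$ produces sums over $(x+y)^{j+k}z^{N-k-j}$ multiplied by the inner factor $P_k^{\la_1,\la_2}\bigl(\frac{y-x}{x+y}\bigr)$; these expansions live in incompatible bases, so ``matching coefficients'' does not isolate $\beta_0/\alpha_k$ for $k>0$ without a further nontrivial change of basis --- this is the one step of your argument that does not go through as written. The paper sidesteps this entirely by specialising variables in the $l$-th identity rather than expanding the $0$-th: setting $y=0$ and then $x=0$ kills every $v_k$ with $k>0$, so the first identity collapses to a single term and the evaluations $P_k^{\alpha,\beta}(\pm 1)$ give $\beta_l\alpha_0^{-1}=\binom{N}{l}\frac{(\la_1)_{N-l}(\la_2)_l}{(\la_1+\la_2)_N}$ directly; setting $y=0$ and then $z=0$ in the second identity similarly gives $\alpha_l\beta_0^{-1}$ up to $\Gamma\tilde R_{0,l}^{-1}$, after which \eqref{ABCD-Racah} finishes the bookkeeping. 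Finally, the paper obtains the second displayed formula for free from the symmetry $\la_1\leftrightarrow\la_3$, which exchanges $v_l\leftrightarrow w_l$ and $R_{k,l}\leftrightarrow R_{l,k}$ --- a shortcut worth adopting in place of a second independent computation.
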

\begin{proof} Recall that the representation space has become after the changes of variables, respectively, the space of polynomials in $t,tu_2,tu_1(1-u_2)$ and the space of polynomials in $t,tv_2,tv_1(1+v_2)$. From their explicit expressions \eqref{eq:vwtuv} we have that the polynomials $t^nv_l$, where $n,N$ vary in $\mathbb{N}$ and $l\in\{0,\dots,N\}$ form a basis of this space (and similarly for the vectors $t^nw_l$). 

Consider the two operators $C'_{123}$ and $t\partial_t$. They commute and the vectors $t^nv_l$ and $t^nw_l$ are common eigenvectors with eigenvalues, respectively, $N(\la_1+\la_2+\la_3+N-1)$ and $n+N$. We used Proposition \ref{prop:actionRacah} and the fact that $t$ is transparent to $C'_{123}$. Note that the eigenvalues $N(\la_1+\la_2+\la_3+N-1)$ are different for different values of $N$, this is where we used the conditions on $\la_1,\la_2,\la_3$ fixed from the beginning in \eqref{cond-Racah}. We conclude that both sets $\{v_l\}_{l=0,\dots,N}$ and $\{w_l\}_{l=0,\dots,N}$ span the same subspace (the common eigenspace of $C'_{123}$ and $t\partial_t$).

Now we are in position to apply the results of Proposition \ref{prop-Racah}, since $C'_{123}$ takes the correct value $N(\la_1+\la_2+\la_3+N-1)$ and $v_l$ and $w_l$ are eigenvectors of respectively $X$ and $Y$ with the correct eigenvalues. Therefore we have that each eigenvector $v_l$ of $X$ (respectively, each eigenvector $w_l$ of $Y$) must be proportional to the vector $\tilde{v}_l$ (respectively, $\tilde{w}_l$) obtained in Proposition \ref{prop-Racah}. We set  $v_l=\alpha_l\tilde{v}_l$ and $w_l=\beta_l\tilde{w}_l$. The equalities (\ref{transition-Racah}) involving the Racah coefficients now read:
\[w_l=\beta_l\sum_{k=0}^N\alpha_k^{-1}R_{l,k} v_k\ \ \ \text{and}\ \ \ v_l=\Gamma^{-1}\alpha_l \sum_{k=0}^N \beta_k^{-1}\tilde{R}_{k,l}w_k\,,\]
where $\Gamma$ was given explicitly. We take $y=0$ and then $x=0$ in the first equality, while we take $y=0$ and then $z=0$ in the second. Using the values of the Jacobi polynomials at $1$ et $-1$
\[P_k^{\alpha,\beta}(1)=\frac{(\alpha)_k}{k!}\ \ \ \ \text{and}\ \ \ \ \ P_k^{\alpha,\beta}(-1)=(-1)^kP_k^{\beta,\alpha}(1)=(-1)^k\frac{(\beta)_k}{k!}\ ,\] 
we get immediately:
\[\beta_l=\binom{N}{l}\frac{(\lambda_1)_{N-l}(\lambda_2)_{l}}{(\lambda_1+\lambda_2)_N}\alpha_0\ \ \ \text{and}\ \ \ \alpha_l=\binom{N}{l}\frac{(\lambda_3)_{N-l}(\lambda_2)_{l}}{(\lambda_2+\lambda_3)_N}\Gamma\tilde{R}_{0,l}^{-1}\beta_0\ .\]
In particular, we find $\alpha_0\beta_0^{-1}=\frac{(\lambda_3)_N}{(\lambda_2+\lambda_3)_N}\Gamma$. Combining both formulas above leads to:
\[\beta_l\alpha_k^{-1}=\binom{N}{l}\binom{N}{k}^{-1}\frac{(\lambda_1)_{N-l}(\lambda_2)_l(\lambda_3)_N}{(\lambda_2)_k(\lambda_3)_{N-k}(\lambda_1+\lambda_2)_N}\tilde{R}_{0,k}\ .\]
It remains to calculate $\tilde{R}_{0,k}$, which is by definition $\frac{B(0)\dots B(k-1)}{D(1)\dots D(k)}$. Using \eqref{ABCD-Racah}, we find:
\[\tilde{R}_{0,k}=\binom{N}{k}\frac{(\la_2)_k(\la_1+\la_2+\la_3+N-1)_{k}(\la_1+\la_2)_N}{(\la_1)_k(\la_3+N-k)_k(\la_1+\la_2+k-1)_k(\la_1+\la_2+2k)_{N-k}}\ .\]
Plugging this into the preceding formula, and doing some easy simplifications, we conclude that:
\[\beta_l\alpha_k^{-1}=\binom{N}{l}\frac{(\lambda_2)_l(\lambda_1)_{N-l}(\lambda_1+\lambda_2+\lambda_3+N-1)_k}{(\lambda_1)_k(\lambda_1+\lambda_2+k-1)_k(\lambda_1+\lambda_2+2k)_{N-k}}\ ,\]
thereby proving the first formula in the theorem. 

The second can by obtained by similar manipulations starting from the formula just obtained for $\beta_l\alpha_k^{-1}$. More efficiently, we note that $v_l$ and $w_l$ are exchanged by the operation $\lambda_1\leftrightarrow \lambda_3$ (up to renaming $x$ by $z$ and vice versa). This operation also exchanges $R_{k,l}$ and $R_{l,k}$. This proves immediately the inverse formula.
\end{proof}

\begin{rema}\label{rema-tensorproduct3}
Similarly to Remark \ref{rema-tensorproduct2}, there is a natural interpretation of the construction above in terms the representation theory of $sl_2$. For a given $N\geq 0$, the polynomials $t^{n}v_l$ (or $t^nw_l$), with $l=0,\dots,N$ and $n\geq 0$, form a basis of a summand in the tensor product $V_{\la_1}\otimes V_{\la_2}\otimes V_{\la_3}$ isomorphic to $N+1$ copies of $V_{\la_1+\la_2+\la_3+2N}$. In fact, this proves the existence of such a summand, under the restrictions \eqref{cond-Racah} on the parameters $\la_1,\la_2,\la_3$.
If these restrictions are satisfied for any positive integer $N$, we have realised explicitly the decomposition of the tensor product:
\[V_{\la_1}\otimes V_{\la_2}\otimes V_{\la_3}=\bigoplus_{N\geq 0} (N+1)V_{\la_1+\la_2+\la_3+2N}\ .\]
Note that the sets $\{v_l\}_{l=0,\dots,N}$ and $\{w_l\}_{l=0,\dots,N}$ are different bases for the space of lowest-weight vectors for the weight $\la_1+\la_2+\la_3+2N$, each one diagonalising, respectively, $C_{12}$ and $C_{23}$.
\end{rema}

\section{Higher-rank Racah algebras and Jacobi operators}

Let $n\geq 3$. The higher-rank Racah algebra (or more precisely, its special quotient following the terminology of \cite{CGPV22}) is the subalgebra of $U(sl_2)^{\otimes n}$ generated by the intermediate Casimir elements. Take any subset $I\subset\{1,\dots,n\}$ and define the associated embedding of $sl_2$ in the $n$-fold tensor product:
\[\delta_I(x)=\sum_{i\in I}1\otimes \dots 1 \otimes \stackrel{(i)}{x}\otimes 1\dots \otimes 1\ \ \ \ \ \text{for any $x\in sl_2$\ ,}\]
where in the sum above $x$ is in position $i$. This extends to an algebra embedding of $U(sl_2)$ into the $n$-fold tensor product and the intermediate Casimir element indexed by $I$ is the image of the Casimir element $C$:
\[C_I=\delta_I(C)\ \ \ \ \ \ \text{for any $I\subset\{1,\dots,n\}$.}\]
We will often drop the accolades in the notation for sets, and denote for example $C_{125}=C_{\{1,2,5\}}$.

Note that we have:
\begin{equation}\label{eq:CIcommut}
[C_I,C_J]=0\ \ \ \ \ \ \text{if $I\subset J$ or $I\cap J=\emptyset$\,.}
\end{equation}
This is obvious if $I\cap J=\emptyset$. If $I\subset J$, it is enough, up to permuting some components, to look at $I=\{1,\dots,k\}$ and $J=\{1,\dots,k+l\}$ in $U(sl_2)^{k+l}$. Then we have 
\[C_I=\delta_{\{1,\dots,k\}}(C\otimes 1^{\otimes^{l-1}})\ \ \ \ \ \ \text{and}\ \ \ \ \ \ \ C_J=\delta_{\{1,\dots,k\}}\bigl(\delta_{\{1,\dots,l\}}(C)\bigr)\ .\]
Since $C\otimes 1^{\otimes^{l-1}}$ and $\delta_{\{1,\dots,l\}}(C)$ commute in $U(sl_2)^{\otimes l}$ (recall that $C$ is central), their images $C_I$ and $C_J$ commute as well.

\paragraph{$n$-fold tensor product of Verma modules.} We fix complex parameters $\la_1,\dots,\la_n$ and we consider the tensor product $V_{\la_1}\otimes\dots\otimes V_{\la_n}$ of lowest-weight Verma modules. As in the previous sections, we use the realisation on polynomials. The representation space is now $Pol(x_1,\dots,x_n)$, the variable $x_i$ corresponding to the $i$-th factor of the tensor product.

In this section, for simplicity, we will identify elements $C_I$ with their action on $Pol(x_1,\dots,x_n)$. We note that the Casimir elements acting in a single factor are numbers:
\[C_i=\frac{\la_i(\la_i-2)}{4}\ \ \ \ \ \text{for any $i=1,\dots,n$.}\]
We shift as we did before the elements $C_I$:
\[C'_I=C_I-\frac{\la_I(\la_I-2)}{4}\ \ \ \ \ \ \ \text{where $\la_I=\sum_{i\in I}\la_i$}\,.\]
With these notations, we have $C'_1=\dots=C'_n=0$.

\subsection{Coupling schemes and commutative subalgebras}

We will discuss coupling schemes in terms of set partitions of $\{1,\dots,n\}$ in a way adapted to our notations $C_I$ and to the goal of describing the commutative subalgebras. Equivalent ways use binary trees or parenthesised words \cite{FLVJ02,LVJ02,VdJ03}. The correspondences between the different possibilities are easy to work out.

\paragraph{Coupling steps.} A coupling step will relate two partitions of the set $\{1,\dots,n\}$. We denote by $I_1\sqcup \dots \sqcup I_k$ a partition of $\{1,\dots,n\}$, where $I_1,\dots,I_k$ are pairwise disjoint and their union is $\{1,\dots,n\}$. We always assume that the appearing subsets are ordered such that $\text{min}(I_1)<\dots <\text{min}(I_k)$.

We define a coupling step to be a transformation of a partition of $\{1,\dots,n\}$ into another such partition as follows:
\[I_1\sqcup \dots \sqcup I_k\ \stackrel{(I_a,I_b)}{\longrightarrow}\ J_1\sqcup \dots \sqcup J_{k-1}\ ,\]
where the subsets $I_a$ and $I_b$ have been removed from the first partition and replaced by their union $I_a\cup I_b$. We see that a coupling step always reduces the number of subsets by one. We denote the coupling step by the two subsets involved: $(I_a,I_b)$. 

To each coupling step is associated a certain change of variables. We start from a set of variables indexed by the subsets of the first partition $(x_{I_1},\dots, x_{I_k})$ and we replace the two variables $x_{I_a},x_{I_b}$ by:
\begin{equation}\label{partialchange}
x_{I_a\cup I_b}=x_{I_a}+x_{I_b}\ \ \ \ \ \text{and}\ \ \ \ \ u_{I_a,I_b}=\frac{x_{I_b}-x_{I_a}}{x_{I_a}+x_{I_b}}\ .
\end{equation}

\paragraph{Coupling schemes and commutative subalgebras.} We define a coupling scheme $\Gamma$ to be a sequence of $n-1$ coupling steps from $\{1\}\sqcup \{2\}\sqcup\dots \sqcup \{n\}$ to the trivial partition $\{1,\dots,n\}$:
\begin{equation}\label{Gamma}
\Gamma\ :\ \{1\}\sqcup \dots \sqcup \{n\}\stackrel{(I_1,J_1)}{\longrightarrow} \dots \dots \dots \stackrel{(I_{n-1},J_{n-1})}{\longrightarrow} \{1,\dots,n\}\ .
\end{equation}
For each step denoted $(I_k,J_k)$, we associate the element $C'_{I_k\cup J_k}$ and we put all these elements together to form the set:
\[S_{\Gamma}=\{C'_{I_1\cup J_1},\dots ,C'_{I_{n-1}\cup J_{n-1}}\}\ .\]
It is quite obvious that the set $S_{\Gamma}$ is commutative. Indeed, after the first step the subset $I_1\cup J_1$ is created in the partition. Then either the second step does not involve this subset, and then $I_2\cup J_2$ will be disjoint from $I_1\cup J_1$, or it involves it, and then $I_2\cup J_2$ will contain $I_1\cup J_1$. In both cases, $C'_{I_1\cup J_1}$ and $C'_{I_2\cup J_2}$ commute thanks to \eqref{eq:CIcommut}. A similar reasoning applies for the whole set $S_{\Gamma}$. 

Note that the last element in $S_{\Gamma}$ is always $C'_{1\dots n}$\,, while the first element always has only two indices $C'_{ij}$ with $i,j\in\{1,\dots,n\}$.

\paragraph{Change of variables.} Associated to a coupling scheme $\Gamma$ as in \eqref{Gamma}, there is a change of variables, which is obtained by applying successively the partial change of variables in \eqref{partialchange} starting from the set of variables $x_1,\dots,x_n$. This results in:
\begin{equation}\label{fullchange}
(x_1,\dots,x_n)\mapsto (x_{1\dots n}\,,\,u_{I_1,J_1}\,,\,\dots\,,\, u_{I_{n-1},J_{n-1}}\,)\ ,
\end{equation}
where $x_{1\dots n}=x_1+\dots+x_n$ and more generally, the notations are, for $I,J\subset\{1,\dots,n\}$:
\[x_I=\sum_{i\in I}x_i\ \ \ \ \ \text{and}\ \ \ \ \ \ u_{I,J}=\frac{x_{J}-x_{I}}{x_{I}+x_{J}}\ .\] 
Each coupling step creates out of two variables $x$ a variable $u$ and a new variable $x$. Consider the $k$-th step corresponding to the subsets $(I_k,J_k)$. Just before it, we have already created the $k-1$ variables $u_{I_1,J_1},\dots,u_{I_{k-1},J_{k-1}}$ which will be fixed until the end. And we have at hand variables $x$ corresponding to the subsets in the partition reached after $k-1$ steps. In particular, we have the variables $x_{I_k}$ and $x_{J_k}$. The $k$-th step uses these two variables to create the new variables $x_{I_k\cup J_k}$ and $u_{I_k,J_k}$.

\begin{exam}\label{exam-coupling}
For $n=3$, to the coupling scheme $1\sqcup 2\sqcup 3 \to 12\sqcup 3 \to 123$ are associated the following subalgebra and new variables:
\[S_{\Gamma}=\{C'_{12},C'_{123}\}\ \ \ \ \ \text{and}\ \ \ \ \ (x_1+x_2+x_3,\frac{x_2-x_1}{x_1+x_2},\frac{x_3-(x_1+x_2)}{x_1+x_2+x_3})\ .\]
We recover the change of variables \eqref{tv-Racah1} from the previous section. The other change of variables \eqref{tv-Racah2} is recovered with the coupling scheme  $1\sqcup 2\sqcup 3 \to 1\sqcup 23 \to 123$, which corresponds to the subalgebra $\{C'_{23},C'_{123}\}$. Note that we have also a third coupling scheme resulting in the subalgebra $\{C'_{13},C'_{123}\}$.

An example for $n=4$ is $1\sqcup 2\sqcup 3\sqcup 4 \to 12\sqcup 3\sqcup 4 \to 12\sqcup 34\to 1234$, resulting in the following subalgebra and new variables:
\[S_{\Gamma}=\{C'_{12},C'_{34},C'_{1234}\}\ \ \ \ \text{and}\ \ \ \ (x_1+x_2+x_3+x_4,\frac{x_2-x_1}{x_1+x_2},\frac{x_4-x_3}{x_3+x_4},\frac{x_3+x_4-(x_1+x_2)}{x_1+x_2+x_3+x_4})\ .\]
\end{exam}

\begin{rema}
It is easy to see that different coupling schemes can produce the same commutative subalgebra. Consider for example $1\sqcup 2\sqcup 3\sqcup 4 \to 1\sqcup 2\sqcup 34 \to 12\sqcup 34\to 1234$ in comparison with the previous example. The number of coupling schemes is easily seen to be $\prod_{k=3}^n\binom{k}{2}=\frac{n!(n-1)!}{2^{n-1}}$, while the number of obtained commutative subalgebras is $(2n-3)!!=3\times 5\times\dots \times 2n-3$, see \cite{FLVJ02}.
\end{rema}

\subsection{Realisations with Jacobi operators}

We fix a coupling scheme $\Gamma$:
\begin{equation}\label{Gamma2}
\Gamma\ :\ \{1\}\sqcup \dots \sqcup \{n\}\stackrel{(I_1,J_1)}{\longrightarrow} \dots \dots \dots \stackrel{(I_{n-1},J_{n-1})}{\longrightarrow} \{1,\dots,n\}\ .
\end{equation}
We have an associated commutative set $S_{\Gamma}=\{C'_{I_1\cup J_1},\dots ,C'_{I_{n-1}\cup J_{n-1}}\}$ and a new set of variables obtained from $x_1,\dots,x_n$:
\[(x_{1\dots n}\,,\,u_{I_1,J_1}\,,\,\dots\,,\, u_{I_{n-1},J_{n-1}}\,)\ ,\]
given in \eqref{fullchange}. Now our goal is to show how the operators in the set $S_{\Gamma}$ are expressed using Jacobi operators and the $n-1$ new variables $u$.
\begin{prop}\label{prop:Cgeneraln}
We have:
\[C'_{I_k\cup J_k}=\Phi^{\la_{I_k},\la_{J_k}}_{u_{I_k,J_k}}+\frac{2}{1-u_{I_k,J_k}}C'_{I_k}+\frac{2}{1+u_{I_k,J_k}}C'_{J_k}\ .\]
\end{prop}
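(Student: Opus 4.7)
My plan is to proceed by induction on the coupling steps of $\Gamma$, establishing simultaneously with the proposition the following stronger invariant: after the changes of variables corresponding to the first $k-1$ coupling steps, for every subset $I$ that currently appears in the partition (in particular for $I_k$ and $J_k$), the triple $\delta_I(H), \delta_I(E), \delta_I(F)$ acts in the current coordinates as a ``Verma module with correction'' in the single variable $x_I$:
\begin{align*}
\delta_I(H) &\mapsto \la_I + 2 x_I \partial_{x_I}\,, \\
\delta_I(E) &\mapsto x_I\,, \\
\delta_I(F) &\mapsto -x_I \partial_{x_I}^2 - \la_I \partial_{x_I} + x_I^{-1} C'_I\,.
\end{align*}
The base case, for singletons $I=\{i\}$, is exactly the Verma realisation \eqref{realisationVerma}, with $C'_{\{i\}}=0$.

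For the inductive step, assume this invariant for $I_k$ and $J_k$. Then $\delta_{I_k\cup J_k}(H,E,F)=\delta_{I_k}(H,E,F)+\delta_{J_k}(H,E,F)$ decomposes, in the current coordinates, as the sum of a Verma-like action on the pair $(x_{I_k},x_{J_k})$ with effective weights $(\la_{I_k},\la_{J_k})$, plus the two corrective terms $x_{I_k}^{-1}C'_{I_k}$ and $x_{J_k}^{-1}C'_{J_k}$ in the $F$-part. The Verma-like part fits exactly the two-variable setup of Proposition \ref{propmodele(t,v)}: the change of variables $(x_{I_k},x_{J_k})\to(t_k=x_{I_k}+x_{J_k},\,u_k=u_{I_k,J_k})$ turns it into a Verma-like action on $x_{I_k\cup J_k}=t_k$ with weight $\la_{I_k\cup J_k}$, together with the usual extra term $t_k^{-1}\Phi_{u_k}^{\la_{I_k},\la_{J_k}}$ in the $F$-part. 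The corrective terms are unaffected by the change of variables, since $C'_{I_k}$ and $C'_{J_k}$ involve only variables internal to $I_k$ and $J_k$ (disjoint from $x_{I_k},x_{J_k}$) and contain no $x_{I_k}$- or $x_{J_k}$-derivatives; after using $x_{I_k}=t_k(1-u_k)/2$ and $x_{J_k}=t_k(1+u_k)/2$ they combine into $t_k^{-1}\bigl(\tfrac{2}{1-u_k}C'_{I_k}+\tfrac{2}{1+u_k}C'_{J_k}\bigr)$.

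Assembling these contributions shows that $\delta_{I_k\cup J_k}(F)$ has again the Verma-like form, with corrective factor
\[Q=\Phi_{u_k}^{\la_{I_k},\la_{J_k}}+\frac{2}{1-u_k}C'_{I_k}+\frac{2}{1+u_k}C'_{J_k}\,.\]
Computing the Casimir $\delta_{I_k\cup J_k}(C)$ from this Verma-like form by the same scalar identity that produces \eqref{eq:C(t,v)} in the Hahn case yields $\tfrac14\la_{I_k\cup J_k}(\la_{I_k\cup J_k}-2)+Q$, and subtracting the shift gives $C'_{I_k\cup J_k}=Q$. This simultaneously proves the proposition and confirms the invariant for the newly created subset $I_k\cup J_k$, closing the induction.

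The main subtlety I expect is the well-definedness of $x_I^{-1}C'_I$ as an operator on the polynomial space, to be handled as in the remark after Proposition \ref{propmodele(t,v)}: $C'_I$ vanishes on polynomials not involving the internal $u$-variables encoding the coupling history of $I$, so a compensating factor of $x_I$ absorbs the $x_I^{-1}$. Keeping this bookkeeping consistent through the successive changes of variables, and checking that the disjointness $I_k\cap J_k=\emptyset$ really produces two commuting Verma-like blocks with no cross-terms, is the only delicate point; everything else is a direct reduction to the two-variable Hahn calculation.
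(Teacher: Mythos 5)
Your proposal is correct and follows essentially the same route as the paper's proof: induction over the coupling steps, maintaining the invariant that each $\delta_I(H),\delta_I(E),\delta_I(F)$ acts in Verma-like form with a corrective term $x_I^{-1}C'_I$ in the $F$-part, reducing each step to the two-variable computation of Proposition \ref{propmodele(t,v)} and converting the corrective terms via $x_{I_k}=x_{I_k\cup J_k}(1-u_{I_k,J_k})/2$, $x_{J_k}=x_{I_k\cup J_k}(1+u_{I_k,J_k})/2$. The only cosmetic difference is that the paper carries an auxiliary recursively-defined operator $\widetilde{F}_I$ and identifies it with $C'_I$ at the end, whereas you build that identification directly into the induction hypothesis.
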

Before giving the proof, we note that $C'_{I_k}$ is either $0$, if $|I_k|=1$, or already inside $S_{\Gamma}$ (and similarly for $C'_{J_k}$). Therefore, the proposition indeed allows, recursively on $k$, to express any element of $S_{\Gamma}$ using the new variables $u$. Examples will follow.
\begin{proof}
We prove the formula by recursion on $k=1,\dots,n-1$, by performing the change of variables step by step, each step being given in \eqref{partialchange}. Just before the $k$-th step in the change of variables, we have at hand a set of variables:
\[\vec{u}=(u_{I_1,J_1},\dots,u_{I_{k-1},J_{k-1}})\ \ \ \ \text{and}\ \ \ \ \ \vec{x}=(\dots,x_{I_k},\dots,x_{J_k},\dots)\ ,\]
where the variables in $\vec{x}$ are indexed by the subsets in the partition of $\{1,\dots,n\}$ reached after $k-1$ steps. The variables in $\vec{u}$ will not be modified anymore. The $k$-step removes the two variables $x_{I_k}$ and $x_{J_k}$ and creates a new variable in $\vec{u}$ and a new variable in $\vec{x}$ by:
\begin{equation}\label{kstep}
x_{I_k\cup J_k}=x_{I_k}+x_{J_k}\ \ \ \ \ \text{and}\ \ \ \ \ u_{I_k,J_k}=\frac{x_{J_k}-x_{I_k}}{x_{I_k}+x_{J_k}}\ .
\end{equation}
We need to calculate the images by $\delta_{I_k\cup J_k}$ of the generators of $sl_2$ and in turn of the Casimir element, and we claim that after the $k$-th step in the change of variables, we have:
\begin{equation}\label{claimedformulas}
\begin{array}{lcl}
\delta_{I_k\cup J_k}(H) & \mapsto & \la_{I_k\cup J_k}+2x_{I_k\cup J_k}\partial_{x_{I_k\cup J_k}}\ ,\\[0.2em]
\delta_{I_k\cup J_k}(E) & \mapsto & x_{I_k\cup J_k}\ ,\\[0.2em]
\delta_{I_k\cup J_k}(F) & \mapsto  & -x_{I_k\cup J_k}\partial_{x_{I_k\cup J_k}}^2-\la_{I_k\cup J_k}\partial_{x_{I_k\cup J_k}}+x_{I_k\cup J_k}^{-1}\widetilde{F}_{I_k\cup J_k}\ ,
\end{array}
\end{equation}
where $\widetilde{F}_{I_k\cup J_k}$ is given recursively by
\[\widetilde{F}_{I_k\cup J_k}=\Phi^{\la_{I_k},\la_{J_k}}_{u_{I_k,J_k}}+\frac{2}{1-u_{I_k,J_k}}\widetilde{F}_{I_k}+\frac{2}{1+u_{I_k,J_k}}\widetilde{F}_{J_k}\ \ \ \ \text{and}\ \ \ \ \widetilde{F}_i=0\,,\ i=1,\dots,n\ .\ \ \ \]
From this, calculating $C'_{I_k\cup J_k}$ is immediate. The part without $\widetilde{F}_{I_k\cup J_k}$ is the usual one-variable calculation which gives exactly the constant that we substracted from $\delta_{I_k\cup J_k}(C)$ to define $C'_{I_k\cup J_k}$. The remaining term gives that $C'_{I_k\cup J_k}=\widetilde{F}_{I_k\cup J_k}$. Note that this only involves the variables $u$ and therefore will remain unmodified until the end of the change of variables. The proposition then follows.

\vskip .1cm
So we are left with proving \eqref{claimedformulas}. We note that $\delta_{I_k\cup J_k}(X)=\delta_{I_k}(X)+\delta_{J_k}(X)$ for any $X\in\{H,E,F\}$, and thus just before the $k$-th step in the change of variables, we have:
\[
\begin{array}{lcl}
\delta_{I_k\cup J_k}(H) & \mapsto & \la_{I_k}+2x_{I_k}\partial_{x_{I_k}}+\la_{J_k}+2x_{J_k}\partial_{x_{J_k}}\ ,\\[0.2em]
\delta_{I_k\cup J_k}(E) & \mapsto & x_{I_k}+x_{J_k}\ ,\\[0.2em]
\delta_{I_k\cup J_k}(F) & \mapsto  & -x_{I_k}\partial_{x_{I_k}}^2-\la_{I_k}\partial_{x_{I_k}}-x_{J_k}\partial_{x_{J_k}}^2-\la_{J_k}\partial_{x_{J_k}}+x_{I_k}^{-1}\widetilde{F}_{I_k}+x_{J_k}^{-1}\widetilde{F}_{J_k}\ .
\end{array}
\]
This is obviously true if $k=1$ since in this case $|I_k|=|J_k|=1$ and $\widetilde{F}_{I_k}=\widetilde{F}_{J_k}=0$, and we have used the recurrence hypothesis if $k>1$.

Now we perform the $k$-th step \eqref{kstep} in the change of variables. The calculation with the terms not involving $\widetilde{F}_{I_k}$ and $\widetilde{F}_{J_k}$ is a two-variable calculation already done in Proposition \ref{propmodele(t,v)}. This gives at once the desired formulas \eqref{claimedformulas} for $H$ and $E$, while for $F$, we have at this point:
\[\delta_{I_k\cup J_k}(F)\ \mapsto\  -x_{I_k\cup J_k}\partial_{x_{I_k\cup J_k}}^2-\la_{I_k\cup J_k}\partial_{x_{I_k\cup J_k}}+x_{I_k\cup J_k}^{-1}\Phi^{\la_{I_k},\la_{J_k}}_{u_{I_k,J_k}}+x_{I_k}^{-1}\widetilde{F}_{I_k}+x_{J_k}^{-1}\widetilde{F}_{J_k}\ .\]
By recurrence, $\widetilde{F}_{I_k}$ and $\widetilde{F}_{J_k}$ only involve the variables $u$ so we leave them as they are, and we only need to transform the variables $x_{I_k},x_{J_k}$ into the new variables. The formulas inverses to \eqref{kstep} are:
\[x_{I_k}=\frac{x_{I_k\cup J_k}(1-u_{I_k,J_k})}{2}\ \ \ \ \ \text{and}\ \ \ \ \ x_{J_k}=\frac{x_{I_k\cup J_k}(1+u_{I_k,J_k})}{2}\ .\]
This concludes the verification of \eqref{claimedformulas} and the proof of the proposition.
\end{proof}
\begin{exam}
For $n=3$, we recover in particular Proposition \ref{prop:actionRacah}. For $n=4$, consider the coupling scheme giving the set $\{C'_{12},C'_{123},C'_{1234}\}$. The new variables are:
\[(x_{1234}\,,\,u_{1,2}\,,\, u_{12,3}\,,\, u_{123,4})=(x_1+x_2+x_3+x_4\,,\, \frac{x_2-x_1}{x_1+x_2}\,,\, \frac{x_3-(x_1+x_2)}{x_1+x_2+x_3}\,,\, \frac{x_4-(x_1+x_2+x_3)}{x_1+x_2+x_3+x_4})\]
and the commutative set is realised by $C'_{12}=\Phi_{u_{1,2}}^{\la_1,\la_2}$, $C'_{123}=\Phi_{u_{12,3}}^{\la_1+\la_2,\la_3}+\frac{2}{1-u_{12,3}}\Phi_{u_{1,2}}^{\la_1,\la_2}$ and
\[C'_{1234}=\Phi_{u_{123,4}}^{\la_1+\la_2+\la_3,\la_4}+\frac{2}{1-u_{123,4}}\Phi_{u_{12,3}}^{\la_1+\la_2,\la_3}+\frac{4}{(1-u_{123,4})(1-u_{12,3)}}\Phi_{u_{1,2}}^{\la_1,\la_2}\ .\]
For the coupling scheme of Example \ref{exam-coupling} giving the set $\{C'_{12},C'_{34},C'_{1234}\}$, the new variables are denoted $u_{1,2}$, $u_{3,4}$ and $u_{12,34}$ and the operators are:
\[C'_{12}=\Phi_{u_{1,2}}^{\la_1,\la_2}\,,\ \ \ C'_{34}=\Phi_{u_{3,4}}^{\la_3,\la_4}\,,\ \ \ 
C'_{1234}=\Phi_{u_{12,34}}^{\la_1+\la_2,\la_3+\la_4}+\frac{2}{1-u_{12,34}}\Phi_{u_{1,2}}^{\la_1,\la_2}+\frac{2}{1+u_{12,34}}\Phi_{u_{3,4}}^{\la_3,\la_4}\ .\]
\end{exam}

\subsection{Eigenvectors and Jacobi polynomials}

We need some further notation. The coupling scheme $\Gamma$ is still fixed. For $1\leq a<b\leq n-1$, we denote:
\[\left\{\begin{array}{l} a\prec_L b\ \ \ \quad\text{if $I_a\cup J_a\subset I_b$\,,}\\[0.4em]
a\prec_R b\ \ \ \quad\text{if $I_a\cup J_a\subset J_b$\,.}\end{array}\right.\]
In words, for a fixed $b$, the indices $a$ such that $a\prec_L b$ are the indices of the steps in the coupling scheme which were used to produce the subset $I_b$ (similarly for $a\prec_Rb$ and $J_b$). We also denote:
\[a\preceq b\ \ \ \quad \text{if}\ \ a=b\ \ \text{or}\ \ a\prec_Lb\ \ \text{or}\ \ a\prec_R b\,.\] 
\begin{exam}
Consider the scheme $1\sqcup 2\sqcup3\sqcup4\stackrel{(1)}{\longrightarrow}12\sqcup 3\sqcup 4\stackrel{(2)}{\longrightarrow} 123\sqcup 4\stackrel{(3)}{\longrightarrow} 1234$, where the number of the steps are shown explicitly. We have $1\prec_L 2$ while no $a$ satisfies $a\prec_R 2$ (since the set $\{3\}$ used on the right in step 2 is of cardinal 1). We have $1\prec_L 3$ and $2\prec_L 3$ while no $a$ satisfies $a\prec_R 3$.
\end{exam}
Finally we define the following shift of the parameters $\lambda$'s, depending on a vector of integers $\vec{k}=(k_1,\dots,k_{n-1})$:
\[(\la^{(\vec{k})}_{I_b},\la^{(\vec{k})}_{J_b})=(\la_{I_b}+2\sum_{a\prec_L b}k_a\,,\,\la_{J_b}+2\sum_{a\prec_R b}k_a)\ ,\]
and we are ready to introduce the polynomials which will be eigenvectors of our operators:
\begin{equation}\label{vx_generaln}
v_{k_{1},\dots,k_{n-1}}=\prod_{b=1}^{n-1}(x_{I_b\cup J_b})^{k_b}P_{k_b}^{\la^{(\vec{k})}_{I_b},\la^{(\vec{k})}_{J_b}}(u_{I_b,J_b})\ ,\ \ \ \ \ \ k_1,\dots,k_{n-1}\geq 0\ .
\end{equation}
For $n=3$, depending on the coupling scheme, we recover the vectors $v_l$ and $w_l$ from \eqref{eq:vwxyz} (when we set $k_1=l$ and $N=k_1+k_2$). The defining formula above allows to express easily the polynomials in terms of the variables $x_1,\dots,x_{n-1}$. We refer to the examples given at the end. However, the polynomials can also be expressed entirely in terms of the new variables $(x_{1\dots n},u_{I_1,J_1},\dots,u_{I_{n-1},J_{n-1}})$ associated to the scheme. The formula will appear in the proof below in \eqref{vu_generalN}. 

The following proposition generalises Proposition \ref{prop:eigenvectorsRacah3} (the case $n=3$) by providing the eigenvectors of the commutative family associated to any coupling scheme for any $n$. 
\begin{prop}
The vectors $\{v_{k_1,\dots,k_{n-1}}\}_{k_1,\dots,k_{n-1}\geq 0}$ are common eigenvectors for the commutative family $S_{\Gamma}$ of operators, with eigenvalues given by:
\[C'_{I_b\cup J_b}(v_{k_1,\dots,k_{n-1}})=\bigl(\sum_{a\preceq b}k_a\bigr)\bigl(\sum_{a\preceq b}k_a+\la_{I_b}+\la_{J_b}-1\bigr)v_{k_1,\dots,k_{n-1}}\ .\]
\end{prop}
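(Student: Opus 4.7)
The plan is to prove the proposition by induction on $b=1,\dots,n-1$, using Proposition \ref{prop:Cgeneraln} to express $C'_{I_b\cup J_b}$ as a combination of one Jacobi operator in $u_{I_b,J_b}$ plus multiples of the operators $C'_{I_b}$ and $C'_{J_b}$, which correspond to strictly earlier steps in the coupling scheme (or which vanish when $|I_b|=1$ or $|J_b|=1$). For the induction to work, I first need to rewrite $v_{k_1,\dots,k_{n-1}}$ in terms of the new variables and isolate its dependence on $u_{I_b,J_b}$. Iterating the partial change of variables \eqref{partialchange} shows that each factor $x_{I_a\cup J_a}$ expands as $x_{1\dots n}\prod_{c:\,a\prec_L c}\tfrac{1-u_{I_c,J_c}}{2}\prod_{c:\,a\prec_R c}\tfrac{1+u_{I_c,J_c}}{2}$, so by collecting terms across the product defining $v_{k_1,\dots,k_{n-1}}$ one obtains
\[v_{\vec k}\ =\ x_{1\dots n}^{K}\prod_{c=1}^{n-1}\Bigl(\tfrac{1-u_{I_c,J_c}}{2}\Bigr)^{m_L^{(c)}}\Bigl(\tfrac{1+u_{I_c,J_c}}{2}\Bigr)^{m_R^{(c)}}P^{\la^{(\vec k)}_{I_c},\la^{(\vec k)}_{J_c}}_{k_c}(u_{I_c,J_c}),\]
where $K=\sum_a k_a$, $m_L^{(b)}=\sum_{a\prec_L b}k_a$, and $m_R^{(b)}=\sum_{a\prec_R b}k_a$. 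In particular, the $u_{I_b,J_b}$-dependent part of $v_{\vec k}$ is $(1-u_{I_b,J_b})^{m_L^{(b)}}(1+u_{I_b,J_b})^{m_R^{(b)}}P^{\la_{I_b}+2m_L^{(b)},\la_{J_b}+2m_R^{(b)}}_{k_b}(u_{I_b,J_b})$, up to constants and $u_{I_b,J_b}$-independent factors.

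Next, I would apply $\Phi^{\la_{I_b},\la_{J_b}}_{u_{I_b,J_b}}$ to this expression using Lemma \ref{lemm:Jacobioperator} together with its mirror version (obtained by $u\mapsto -u$ and swapping parameters), namely $\Phi_u^{\alpha,\beta}\circ(1+u)^l=(1+u)^l\circ\bigl(\Phi_u^{\alpha,\beta+2l}+l\alpha-l(l+\beta-1)\tfrac{1-u}{1+u}\bigr)$. Commuting both $(1-u_{I_b,J_b})^{m_L^{(b)}}$ and $(1+u_{I_b,J_b})^{m_R^{(b)}}$ through the Jacobi operator produces the operator $\Phi^{\la_{I_b}+2m_L^{(b)},\la_{J_b}+2m_R^{(b)}}_{u_{I_b,J_b}}$, for which the Jacobi polynomial in $v_{\vec k}$ is an eigenvector with eigenvalue $k_b(k_b+\la_{I_b}+\la_{J_b}+2m_L^{(b)}+2m_R^{(b)}-1)$, plus two ``correction'' multiplication operators $-m_L^{(b)}(m_L^{(b)}+\la_{I_b}-1)\tfrac{1+u}{1-u}$ and $-m_R^{(b)}(m_R^{(b)}+\la_{J_b}-1)\tfrac{1-u}{1+u}$ (together with two additive constants). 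Meanwhile, the induction hypothesis applied to the subsets $I_b=I_c\cup J_c$ and $J_b=I_{c'}\cup J_{c'}$ (when these are non-trivial) gives $C'_{I_b}(v_{\vec k})=m_L^{(b)}(m_L^{(b)}+\la_{I_b}-1)v_{\vec k}$ and $C'_{J_b}(v_{\vec k})=m_R^{(b)}(m_R^{(b)}+\la_{J_b}-1)v_{\vec k}$ (which also hold trivially when $|I_b|=1$ or $|J_b|=1$, as then $m_L^{(b)}=0$ or $m_R^{(b)}=0$). Here I use the combinatorial identity $\{a:a\prec_L b\}=\{a:a\preceq c\}$, which says that the steps used to build $I_b$ are exactly those from the sub-scheme rooted at $c$, and similarly on the right.

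The final step is the cancellation: the multiplication operator $-m_L^{(b)}(m_L^{(b)}+\la_{I_b}-1)\tfrac{1+u}{1-u}$ coming from the Jacobi commutation combines with $\tfrac{2}{1-u_{I_b,J_b}}C'_{I_b}(v_{\vec k})=\tfrac{2m_L^{(b)}(m_L^{(b)}+\la_{I_b}-1)}{1-u_{I_b,J_b}}v_{\vec k}$ to produce the constant $m_L^{(b)}(m_L^{(b)}+\la_{I_b}-1)$, and symmetrically for the right side. All $u_{I_b,J_b}$-dependence thus disappears. The remaining scalar is
\[k_b(k_b+\la_{I_b}+\la_{J_b}+2m_L^{(b)}+2m_R^{(b)}-1)+m_L^{(b)}\la_{J_b}+m_R^{(b)}(\la_{I_b}+2m_L^{(b)})+m_L^{(b)}(m_L^{(b)}+\la_{I_b}-1)+m_R^{(b)}(m_R^{(b)}+\la_{J_b}-1),\]
which a direct expansion shows equals $K_b(K_b+\la_{I_b}+\la_{J_b}-1)$ with $K_b=k_b+m_L^{(b)}+m_R^{(b)}=\sum_{a\preceq b}k_a$, completing the induction step.

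The computations themselves are mechanical applications of Lemma \ref{lemm:Jacobioperator}; the main obstacle is bookkeeping, specifically correctly tracking which subsets $u_{I_b,J_b}$ appears in after the iterated change of variables, and matching the shifts $\la^{(\vec k)}_{I_b},\la^{(\vec k)}_{J_b}$ with the exponents $m_L^{(b)},m_R^{(b)}$ so that the Jacobi polynomial in $v_{\vec k}$ is precisely an eigenvector of the \emph{shifted} Jacobi operator produced by the commutation lemma.
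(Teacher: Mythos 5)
Your proposal is correct and follows essentially the same route as the paper's proof: rewriting $v_{\vec k}$ entirely in the variables $(x_{1\dots n},u_{I_1,J_1},\dots,u_{I_{n-1},J_{n-1}})$, inducting on $b$ via the recursion of Proposition \ref{prop:Cgeneraln}, and using the two-sided extension of Lemma \ref{lemm:Jacobioperator} so that the rational correction terms cancel against the $\tfrac{2}{1\mp u_{I_b,J_b}}C'_{I_b},C'_{J_b}$ contributions. The bookkeeping (the identification $\la^{(\vec k)}_{I_b}=\la_{I_b}+2m_L^{(b)}$ and the identity $\{a:a\prec_L b\}=\{a:a\preceq c\}$ for $I_b=I_c\cup J_c$) and the final scalar sum all check out.
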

\begin{proof}
First we need to express the polynomials $v_{k_1,\dots,k_{n-1}}$ only in terms of the variables $x_{1\dots n}$ and the $u$'s. We have:
\begin{equation}\label{vu_generalN}
v_{k_{1},\dots,k_{n-1}}=\left(\frac{x_{1\dots n}}{2}\right)^{k_1+\dots+k_{n-1}}\prod_{b=1}^{n-1}(1-u_{I_b,J_b})^{\sum_{a\prec_Lb}k_a}(1+u_{I_b,J_b})^{\sum_{a\prec_Rb}k_a}P_{k_b}^{\la^{(\vec{k})}_{I_b},\la^{(\vec{k})}_{J_b}}(u_{I_b,J_b})\ .
\end{equation}
To see this, let $a\in\{1,\dots,n-1\}$ and consider the factor $(x_{I_a\cup J_a})^{k_a}$ appearing in \eqref{vx_generaln}. We need to rewrite it in terms of the new variables (only if $a<n-1$ since otherwise it is equal to $x_{1\dots n}^{k_{n-1}}$). Now let $b>a$ be the index such that $I_a\cup J_a$ is either equal to $I_b$ or $J_b$. Then the $b$-th step in the change of variable gives that 
\[2x_{I_a\cup J_a}=\left\{\begin{array}{l} x_{I_b\cup J_b}(1-u_{I_b,J_b})\ \ \ \quad\text{if $I_a\cup J_a= I_b$\,,}\\[0.4em]
x_{I_b\cup J_b}(1+u_{I_b,J_b})\ \ \ \quad\text{if $I_a\cup J_a= J_b$\,.}\end{array}\right.\]
Reproducing this for $x_{I_b\cup J_b}$ until we reach $x_{1\dots n}$, we see that the factor $(x_{I_a\cup J_a})^{k_a}$ contributes first to an overall factor $\left(\frac{x_{1\dots n}}{2}\right)^{k_a}$, and second to the factors $(1\pm u_{I_b,J_b})$ for any $b$ such that either $a\prec_Lb$ or $a\prec_R b$, the sign being chosen accordingly. This proves formula \eqref{vu_generalN}.

Now we can apply the operators $C'_{I_b\cup J_b}$, using the recursive formula in Proposition \ref{prop:Cgeneraln}, on the vectors $v_{k_1,\dots,k_{n-1}}$ given by \eqref{vu_generalN}. We use induction on $b$. For $b=1$, we have $C'_{I_1\cup J_1}=\Phi^{\la_{I_1},\la_{J_1}}_{u_{I_1,J_1}}$. Since only the variable $u_{I_1,J_1}$ appears, the application on $v_{k_1,\dots,k_{n-1}}$ gives immediately the eigenvalue $k_1(k_1+\la_{I_1}+\la_{J_1}-1)$ and the proposition is proved for $b=1$.

Then let $b>1$ and use the recursive formula:
\[C'_{I_b\cup J_b}=\Phi^{\la_{I_b},\la_{J_b}}_{u_{I_b,J_b}}+\frac{2}{1-u_{I_b,J_b}}C'_{I_b}+\frac{2}{1+u_{I_b,J_b}}C'_{J_b}\ .\]
By the recurrence hypothesis, the last two terms applied on $v_{k_1,\dots,k_{n-1}}$ give the following coefficient:
\begin{equation}\label{action1}
\frac{2}{1-u_{I_b,J_b}}\sum_{a\prec_L b}k_a\bigl(\sum_{a\prec_L b}k_a+\la_{I_b}-1\bigr)+\frac{2}{1+u_{I_b,J_b}}\sum_{a\prec_R b}k_a\bigl(\sum_{a\prec_R b}k_a+\la_{J_b}-1\bigr)\ .
\end{equation}
It remains to apply $\Phi^{\la_{I_b},\la_{J_b}}_{u_{I_b,J_b}}$. One needs to apply it only on the factor containing the variable $u_{I_b,J_b}$ in the product \eqref{vu_generalN}. From Lemma \ref{lemm:Jacobioperator} used in the preceding section together with $\Phi_{-u}^{\beta,\alpha}=\Phi_{u}^{\alpha,\beta}$, one finds:
\begin{multline*}
\Phi_u^{\alpha,\beta}\circ (1-u)^l(1+u)^m=(1-u)^l(1+u)^m\circ \Bigl(\Phi_u^{\alpha+2l,\beta+2m}\\
+l\beta+m\alpha+2lm-l(l+\alpha-1)\frac{1+u}{1-u}-m(m+\beta-1)\frac{1-u}{1+u}\Bigr)\ .
\end{multline*}
This shows that $\Phi^{\la_{I_b},\la_{J_b}}_{u_{I_b,J_b}}$ applied on $v_{k_1,\dots,k_{n-1}}$ give the following coefficient:
\begin{multline*}
k_b(k_b+\la_{I_b}+\la_{J_b}+2\sum_{a\prec_L b}k_a+2\sum_{a\prec_R b}k_a -1)+\la_{J_b}\sum_{a\prec_L b}k_a+\la_{I_b}\sum_{a\prec_R b}k_a+2\sum_{a\prec_L b}k_a\sum_{a\prec_R b}k_a\\
-\sum_{a\prec_L b}k_a(\sum_{a\prec_L b}k_a+\la_{I_b}-1)\frac{1+u_{I_b,J_b}}{1-u_{I_b,J_b}}-\sum_{a\prec_R b}k_a(\sum_{a\prec_R b}k_a+\la_{J_b}-1)\frac{1-u_{I_b,J_b}}{1+u_{I_b,J_b}}\ .
\end{multline*}
Summing with \eqref{action1}, this combines to produce the eigenvalue as claimed in the proposition.
\end{proof}

\paragraph{Examples.} We conclude this section with some examples of polynomials associated to coupling schemes for $n=4$. We use the notations $\la_I=\sum_{i\in I}\la_i$, $x_I=\sum_{i\in I}x_i$ and $u_{I,J}=\frac{x_J-x_I}{x_{I}+x_J}$.\\
$\bullet$ For the scheme $1\sqcup 2\sqcup3\sqcup4\rightarrow 12\sqcup 3\sqcup 4\rightarrow 123\sqcup 4\rightarrow 1234$ giving the commutative set $\{C'_{12},C'_{123},C'_{1234}\}$, the vectors are:
\[(x_{1234})^{k_3}(x_{123})^{k_2}(x_{12})^{k_1}P_{k_3}^{\la_{123}+2k_1+2k_2,\la_4}(u_{123,4})P_{k_2}^{\la_{12}+2k_1,\la_3}(u_{12,3})P_{k_1}^{\la_1,\la_2}(u_{1,2})\ .\]
$\bullet$ For the scheme $1\sqcup 2\sqcup3\sqcup4\rightarrow 12\sqcup 3\sqcup 4\rightarrow 12\sqcup 34\rightarrow 1234$ giving the commutative set $\{C'_{12},C'_{34},C'_{1234}\}$, the vectors are:
\[(x_{1234})^{k_3}(x_{34})^{k_2}(x_{12})^{k_1}P_{k_3}^{\la_{12}+2k_1,\la_{34}+2k_2}(u_{12,34})P_{k_2}^{\la_{3},\la_4}(u_{3,4})P_{k_1}^{\la_1,\la_2}(u_{1,2})\ .\]
$\bullet$ For the scheme $1\sqcup 2\sqcup3\sqcup4\rightarrow 1\sqcup 2\sqcup 34\rightarrow 1\sqcup 234\rightarrow 1234$ giving the commutative set $\{C'_{34},C'_{234},C'_{1234}\}$, the vectors are:
\[(x_{1234})^{k_3}(x_{234})^{k_2}(x_{34})^{k_1}P_{k_3}^{\la_{1},\la_{234}+2k_1+2k_2}(u_{1,234})P_{k_2}^{\la_{2},\la_{34}+2k_1}(u_{2,34})P_{k_1}^{\la_3,\la_4}(u_{3,4})\ .\]
$\bullet$ For the scheme $1\sqcup 2\sqcup3\sqcup4\rightarrow 1\sqcup 24\sqcup 3\rightarrow 1\sqcup 234\rightarrow 1234$ giving the commutative set $\{C'_{24},C'_{234},C'_{1234}\}$, the vectors are:
\[(x_{1234})^{k_3}(x_{234})^{k_2}(x_{24})^{k_1}P_{k_3}^{\la_{1},\la_{234}+2k_1+2k_2}(u_{1,234})P_{k_2}^{\la_{24}+2k_1,\la_3}(u_{24,3})P_{k_1}^{\la_2,\la_4}(u_{2,4})\ .\]
Using the convolution formula obtained for $n=3$ in Theorem \ref{theo:ConvolutionRacah}, under suitable conditions on parameters $\lambda_i$, one sees that each family of vectors is related to the next one with transition coefficients involving one Racah polynomial. Thus to pass from the first one to the last, the transition coefficients involve products of three Racah polynomials. These are examples of bivariate Racah polynomials. 

We refer to \cite{CFR23} where the graph of commutative subets $S_{\Gamma}$ is studied in details for $n=4$ along with the corresponding bivariate Racah polynomials appearing as transition coefficients. In the example above, the first subset and the last subset are at distance 3 in this graph, which explains that we need products of three Racah coefficients for the associated transition coefficients. For $n=4$, the diameter of the graph is 3. We refer to \cite{FLVJ02} for a study of the graph for general $n$.


\begin{thebibliography}{99}

\bibitem{CFR23} N. Crampe, L. Frappat, E. Ragoucy, \emph{Representations of the rank two Racah algebra and orthogonal multivariate polynomials},
Linear Algebra Appl. 664 (2023), 165--215.

\bibitem{CFGPRV21} N. Crampe, L. Frappat, J. Gaboriaud, L. Poulain d'Andecy, E. Ragoucy and L. Vinet, \emph{The Askey--Wilson algebra and its avatars,}
J. Phys. A: Math. Theor. 54 (2021) 063001.

\bibitem{CGPV22} N. Crampe, J. Gaboriaud, L. Poulain d'Andecy and L. Vinet, \emph{Racah algebras, the diagonal centralizer of $sl(2)$ and its Hilbert--Poincaré series,} Ann. Henri Poincar\'e 23 (2022), no. 7, 2657--2682.

\bibitem{DBGvVV17} H. De Bie, V. X. Genest, W. van de Vijver, and L. Vinet, \emph{A higher rank Racah algebra and
the $\mathbb{Z}^n_2$ Laplace--Dunkl operator}, Journal of Physics A: Mathematical and Theoretical 51,
025203 (2017).

\bibitem{DBvV20} H. De Bie, and W. van de Vijver, \emph{A discrete realization of the higher rank Racah
algebra}, Constructive Approximation 52 (2020) 1--29.

\bibitem{Du81} C.F. Dunkl, \emph{A difference equation and Hahn polynomials in two variables},
Pacific J. Math. 92 (1981), no. 1, 57--71.

\bibitem{DX01} C.F. Dunkl and Y. Xu, \emph{Orthogonal polynomials of several variables}, Encyclopedia of mathematics and its applications, vol. 81. Cambridge Univ. Press, 2001.
    
\bibitem{FLVJ02} V. Fack, S. Lievens, J. Van der Jeugt, \emph{On the diameter of the rotation graph of binary coupling trees},
Discrete Math. 245 (2002), no. 1-3, 1--18.       
         
\bibitem{GIVZ16} V. Genest, M.E.H. Ismail, L. Vinet, A. Zhedanov, \emph{Tridiagonalization of the hypergeometric operator and the Racah-Wilson algebra}, Proc. Amer. Math. Soc. 144 (2016), no. 10, 4441--4454.

\bibitem{GVZ14} V. Genest, L. Vinet, and A. Zhedanov, \emph{The Racah algebra and superintegrable models},
Journal of Physics: Conference Series 512, 012011 (2014).

\bibitem{GVZ14b} V. Genest, L. Vinet, and A. Zhedanov, \emph{Superintegrability in two dimensions and the Racah--Wilson algebra}, Lett. Math. Phys. 104 (2014), no. 8, 931--952.


\bibitem{GZ88} Ya. A. Granovskii, A. S. Zhedanov, \emph{Nature of the symmetry group of the 6j-symbol},
Soviet Phys. JETP 67 (1988), no. 10, 1982--1985 (1989); translated from
Zh. Èksper. Teoret. Fiz. 94 (1988), no. 10, 49--54 (Russian)



\bibitem{KLS10} R. Koekoek, P.A. Lesky and R.F. Swarttouw, \emph{Hypergeometric orthogonal polynomials and
their q-analogues}, Springer Monographs in Mathematics, Springer Berlin, Heidelberg (2010).

\bibitem{KVJ98} H.T. Koelink, J. Van Der Jeugt, \emph{Convolutions for orthogonal polynomials from Lie and quantum algebra representations}, SIAM J. Math. Anal. 29 (1998), no. 3, 794--822.

\bibitem{Labriet22} Q. Labriet, \emph{A geometrical point of view for branching problems for holomorphic discrete series of conformal Lie groups}, Internat. J. Math. 33 (2022), no. 10-11, Paper No. 2250069, 70 pp.
 
\bibitem{LVJ02} S. Lievens, J. Van der Jeugt, \emph{3nj-coefficients of $su(1,1)$ as connection coefficients between orthogonal polynomials in $n$ variables}, J. Math. Phys. 43 (2002), no. 7, 3824--3849.

\bibitem{Pos15} S. Post, \emph{Racah Polynomials and Recoupling Schemes of $su(1,1)$}, SIGMA 11 (2015) 057,
doi:10.3842/SIGMA.2015.057.

\bibitem{Ter01} P. Terwilliger, \emph{Two linear transformations each tridiagonal with respect to an eigenbasis of the
other}, Linear Algebra and its Applications 330 (2001) 149--203.

\bibitem{VdJ97} J. Van der Jeugt, \emph{Coupling coefficients for Lie algebra representations and addition formulas for special functions}, J. Math. Phys. 38 (1997), no. 5, 2728-2740.

\bibitem{VdJ03} J. Van der Jeugt, \emph{3nj-coefficients and orthogonal polynomials of hypergeometric type}, Orthogonal polynomials and special functions (Leuven, 2002), 25--92, Lecture Notes in Math., 1817, Springer, Berlin, 2003.

\bibitem{VZ19} L. Vinet, A. Zhedanov, \emph{The Heun operator of Hahn-type}, Proc. Amer. Math. Soc. 147 (2019) 2987--2998.

\bibitem{Xu15} Y. Xu, \emph{Hahn, Jacobi, and Krawtchouk polynomials of several variables}, Approx. Theory 195 (2015), 19--42.

\bibitem{Zhe93} A. Zhedanov, \emph{Hidden symmetry algebra and overlap coefficients for two ring-shaped potentials}, J. Phys. A 26 (1993) 4633.

\end{thebibliography}
\end{document}